\numberwithin{equation}{section}
\def\DD{\mathcal D}
\def\R{\mathbb R}
\def\pa{\partial}
\def\trace{{\rm tr}}
\def\00{{\bf 0}}
\newcommand{\tr}{\mbox{tr}\,}
\newcommand{\Hess }{{\rm Hess}}
\newcommand{\diver}{{\rm div \,}}
\newcommand{\divg}{{\rm div}_g \,}
\newcommand{\ric}{{\rm Ric}}
\newtheorem*{theorem*}{Theorem}
\newtheorem{theorem}{Theorem}[section]
\newtheorem{lemma}[theorem]{Lemma}
\newtheorem{proposition}[theorem]{Proposition}
\theoremstyle{definition}
\begin{document}
  
    \title[Symmetry for critical points of CKN inequalities]{Symmetry for positive critical points of Caffarelli-Kohn-Nirenberg inequalities}
    

\author{Giulio Ciraolo}
\address{G. Ciraolo. Dipartimento di Matematica "Federigo Enriques",
Universit\`a degli Studi di Milano, Via Cesare Saldini 50, 20133 Milano, Italy}
\email{giulio.ciraolo@unimi.it}

\author{Rosario Corso}
\address{R. Corso. Dipartimento di Matematica e Informatica,
	Universit\`a degli Studi di Palermo, Via Archirafi 34, 90123 Palermo, Italy
}
\email{rosario.corso02@unipa.it}

    \keywords{Caffarelli-Kohn-Nirenberg inequalities; Optimal constant; Classification of solutions; Quasilinear anisotropic elliptic equations; Liouville-type theorem.  }
    \subjclass{35J92, 35B53, 35B09, 53C21, }
    
 \begin{abstract}
We consider positive critical points of Caffarelli-Kohn-Nirenberg inequalities and prove a Liouville type result which allows us to give a complete classification of the solutions in a certain range of parameters, providing a symmetry result for positive solutions. The governing operator is a weighted $p$-Laplace operator, which we consider for a general $p \in (1,d)$. For $p=2$, the symmetry breaking region for extremals of Caffarelli-Kohn-Nirenberg inequalities was completely characterized in \cite{DEL}. Our results extend this result to a general $p$ and are optimal in some cases.
\end{abstract}

\maketitle

\tableofcontents

\section{Introduction}
In this paper we study the symmetry of critical points related to Caffarelli-Kohn-Nirenberg (CKN) inequalities in $\R^d$, with $d \geq 3$. CKN inequalities where proved in \cite{CKN} (see also \cite{ILIN, Lin, Mazya}) and assert that there exists a positive constant $C_{a,b}$ such that 
\begin{equation}\label{CKN}
\left (\int_{\R^d} |x|^{-bq}u^q dx\right)^{\frac{1}{q}} \leq C_{a,b} \left (\int_{\R^d} |x|^{-ap}|Du|^p dx\right )^{\frac{1}{p}}
\end{equation}
holds for any $u\in \DD^{1,p}(\R^d,|x|^{-ap})$, where $\DD^{1,p}(\R^d,|x|^{-ap})$ is the completion of $C^\infty_0(\R^d)$ with respect to the norm 
$$
\|u\|_{\DD^{1,p}(\R^d,|x|^{-ap})}=\left (\int_{\R^d} |x|^{-ap}|D u|^p dx \right )^{\frac 1 p}.
$$
In this paper, we consider $d\geq 3$, $p>1$, $a\leq b <  a+1$, $q=\frac{dp}{d-p(1+a-b)}$, $a < a_c$, where $a_c=\frac{d}{p}-1$, and $C_{a,b}=C_{a,b}(d,p,a,b,q)$. We notice that the exponent $q$ is determined by the invariance of the inequality under scaling. We mention that \eqref{CKN} is sometimes called the Hardy-Sobolev inequality, since it can be seen as an interpolation between the Sobolev inequality (i.e. for $a=b=0$) and the weighted Hardy inequalities (when $b=a+1$), see \cite{CatrinaWang} for more details.

In the last decades, a large effort has been spent to investigate the sharp constants in \eqref{CKN} \cite{CNV, DPD}, as well as symmetry properties or symmetry breaking of extremals. Indeed, it is well-known that minimizers of \eqref{CKN} may not be radially symmetric for some values of $a$ and $b$ and hence symmetry breaking may occur \cite{FS, CM}. The problem of identifying the optimal symmetry breaking region is still open for a general $p$, but it has been recently settled in \cite{DEL} for $p=2$ (see also the references quoted in the introduction of \cite{DEL} for many interesting partial results). In particular, in \cite{DEL} the authors elegantly characterize the optimal symmetry range for minimizers, and more generally for positive critical points, of CKN inequalities for $p=2$ by using the so-called \emph{carr\'e du champ} method. 

In this paper we do not restrict to the case $p=2$ and we consider positive critical points of CKN inequalities \eqref{CKN}, which (up to a multiplicative constant) are solutions to
\begin{equation}
\label{eq_main_u_p}
\begin{cases}
\diver(|x|^{-ap}|D u|^{p-2}D u)+|x|^{-bq}u^{q-1}=0 \;\text{ in }\R^d,\\
u>0,\\
u\in \DD^{1,p}(\R^d,|x|^{-ap})
\end{cases}
\end{equation}
with $1<p<d$.

Our goal is to investigate the optimal region of symmetry breaking and we prove the following symmetry result for solutions to \eqref{eq_main_u_p}.

\begin{theorem}\label{th_main_p}
Let $(a,b) \neq (0,0)$ and let $u$ be a solution of \eqref{eq_main_u_p} with
		\begin{equation}\label{cond_rad}
		\frac{(1+a-b)(a_c-a)}{a_c-a+b}\leq \sqrt{ \frac{d-2}{n-2}}, 
		\end{equation}
	where $n=\frac{d}{1+a-b}$. If $a=b$ or $p<n/2$, then 
	\begin{equation} \label{u_Talenti}
	u(x)=\left( \frac{q}{p(p-1)^{p-1}}  \right)^{\frac{1}{q-p}} \left (\frac{(d-p(1+a))\lambda}{\lambda^p+c_2|x|^{\frac{(q-p)(d-p(1+a))}{p(p-1)}} }\right )^{\frac p{q-p}}
	\end{equation}
	for some $\lambda > 0$. 
\end{theorem}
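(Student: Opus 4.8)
The plan is to combine an Emden--Fowler type change of variables with an integral ``carr\'e du champ''/Bochner identity for the weighted $p$-Laplacian, adapting to a stationary quasilinear setting the argument used in \cite{DEL} for $p=2$. As a preliminary step I would establish the sharp qualitative behaviour of solutions: any $u$ solving \eqref{eq_main_u_p} is $C^{1,\alpha}$ away from the origin and obeys two-sided bounds $u(x)\simeq|x|^{-\gamma_0}$ as $x\to 0$ and $u(x)\simeq|x|^{-\gamma_\infty}$ as $|x|\to\infty$, with matching gradient bounds, where $\gamma_0,\gamma_\infty$ are the exponents of the radial profile \eqref{u_Talenti}. This follows from Serrin--Moser iteration, a Harnack inequality for weighted quasilinear equations, and comparison with explicit radial barriers; it is what makes the integrations by parts below legitimate and forces the boundary terms at $0$ and $\infty$ to vanish.

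Next I would pass to cylindrical variables $s=-\beta\log|x|\in\R$, $\omega=x/|x|\in\SS^{d-1}$, and write $u(x)=|x|^{-\gamma}v(s,\omega)$ with $\beta,\gamma$ chosen so that the equation for $v$ becomes autonomous --- possible exactly because $q$ is the scaling-critical exponent. Schematically it reads $-\mathrm{div}_g(|\nabla v|_g^{p-2}\nabla v)+\Lambda v^{p-1}=v^{q-1}$ on $\mathcal C=\R\times\SS^{d-1}$, where $g$ is a product metric whose relative scaling of the line and the sphere encodes the effective dimension $n=\frac{d}{1+a-b}$ and $\Lambda>0$ is a ``mass'' produced by the weights; a direct computation identifies $g$ and $\Lambda$ in terms of $(d,p,a,b)$. (Equivalently one may keep the weights on $\R^d$, or recast the problem as an anisotropic critical $p$-Laplace equation; the cylinder makes the role of the spherical spectrum most transparent.)

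The core of the proof --- and the main obstacle --- is the rigidity identity. On exhausting subdomains of $\mathcal C$, I would test the equation for $v$ against a carefully chosen multiplier built from $|\nabla v|_g^{p-2}$ and the operator occurring in the equation (the $p$-analogue of the carr\'e du champ multiplier), and integrate by parts via the Bochner formula on $\mathcal C$. Because the $p$-Laplacian degenerates, one first works with the regularized coefficient $(|\nabla v|_g^2+\varepsilon)^{(p-2)/2}$ and then lets $\varepsilon\to 0$, the boundary terms disappearing by the asymptotics above and the regularization errors being shown negligible. After rearrangement one reaches, schematically,
\[
0 \;\geq\; \int_{\mathcal C}|\nabla v|_g^{p-2}\,\mathcal Q(v)\,d\mu_g ,
\]
where $\mathcal Q(v)\ge 0$ collects a traceless-Hessian square $\|\mathrm{Hess}\,v-\tfrac1n(\Delta_g v)\,g\|_g^2$ and an angular term $\Theta\,|\nabla_\omega v|^2$ (plus lower-order $p$-corrections), the scalar $\Theta$ being assembled from $\Lambda$, $g$, $n$, the first nonzero eigenvalue $d-1$ of $-\Delta_\omega$ on $\SS^{d-1}$, and the exponents $p,q$, and being non-negative \emph{precisely in the regime} \eqref{cond_rad}. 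Hence under \eqref{cond_rad} one gets $\mathcal Q(v)\equiv 0$, so $v=v(s)$ and $u$ is radially symmetric. The delicate points are finding the multiplier that makes $\mathcal Q(v)$ a genuine sum of squares, computing $\Theta$ with the sharp constant so that it reproduces \eqref{cond_rad} exactly, and handling the degeneracy set $\{\nabla v=0\}$ together with the singular weights.

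Finally, once $u$ is radial, the change of radial variable $|x|=\rho^{\alpha}$ with a suitable $\alpha>0$ turns \eqref{eq_main_u_p} into the radial critical $p$-Laplace equation $(\rho^{n-1}|w'|^{p-2}w')'+\rho^{n-1}w^{\frac{np}{n-p}-1}=0$ on $(0,\infty)$ in the fictitious dimension $n$. The hypothesis $a=b$ (equivalently $n=d$) or $p<n/2$ --- which is what secures the integrability needed in the steps above together with the classification of the resulting radial solutions --- lets one conclude, via the known uniqueness/classification results for the critical $p$-Laplacian and a Pohozaev / phase-plane argument, that $w$ is the Talenti profile up to dilation; transforming back gives \eqref{u_Talenti}. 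The excluded case $(a,b)=(0,0)$ is just the classical Sobolev inequality, where the symmetry of positive critical points is already known.
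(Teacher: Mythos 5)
Your high-level architecture (sharp asymptotic estimates near $0$ and $\infty$, Emden--Fowler type transformation, a Bochner/\emph{carr\'e du champ} identity tested against a multiplier built from $|\nabla v|_g^{p-2}$, $\varepsilon$-regularization of the $p$-Laplacian, vanishing boundary terms, rigidity forcing symmetry) is the right one and matches the paper in spirit. The reformulation is different in detail: the paper does \emph{not} pass to the cylinder $\R\times\SS^{d-1}$. It instead performs the power change of variables $x\mapsto |x|^{\alpha-1}x$ and stays on $\R^d$, rewriting the problem as a weighted $p$-Laplace equation $\frac{1}{|x|^{n-d}}\diver\bigl(|x|^{n-d}|\nabla w|_g^{p-2}\nabla w\bigr)+w^{q-1}=0$ for a fixed smooth (away from the origin) Riemannian metric $g$ with constant determinant, and then works with the pressure $v=w^{-(q-p)/p}$ together with Newton's inequality $\tr(W^2)\ge\frac1d(L_pv)^2$ and the explicit Ricci and $\Hess(|x|^{n-d})$ formulas. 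The two settings are diffeomorphic, but they lead to \emph{different quantitative mechanisms}: the cylinder naturally pairs with a spectral decomposition on $\SS^{d-1}$ (eigenvalue $d-1$), while the paper's formulation pairs with Newton's inequality and the Ricci curvature of $g$, whose transverse eigenvalue carries a factor $d-2$.

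This difference is precisely where your outline has a genuine gap. You assert that the quadratic form $\mathcal Q(v)$, built with the first spherical eigenvalue $d-1$, is non-negative ``precisely in the regime \eqref{cond_rad}.'' But \eqref{cond_rad} is $\alpha\le\sqrt{(d-2)/(n-2)}$; the spherical-eigenvalue computation is the one that leads, for $p=2$, to the sharp Felli--Schneider threshold $\alpha\le\sqrt{(d-1)/(n-1)}$, which is strictly weaker than \eqref{cond_rad} when $n>d$. If your cylinder argument really closed with the $d-1$ constant for all $1<p<d$, you would be proving the paper's Conjecture, not its Theorem; and for $p\ne 2$ the DEL-type closure of the spectral argument is exactly what is not available (the paper notes DEL exploits linearity via the Kelvin transform and the spherical representation of $\Delta$). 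The paper's $\sqrt{(d-2)/(n-2)}$ threshold arises specifically from the combination of Newton's inequality in fictitious dimension $n$, the Ricci term $(1-\alpha^2)(d-2)/|x|^2$, and the Hessian of the weight; you cannot get that number from the spherical spectrum. A second, smaller inaccuracy: the hypothesis ``$a=b$ or $p<n/2$'' is not used to classify the radial profiles. Its only role in the paper is to guarantee the integrability $(n-d)\int_{B_1}|\nabla w|_g^{2(p-1)}|x|^{n-d-2}\,dx<\infty$, which is needed to justify the Caccioppoli-type estimate and hence the weak $W^{1,2}_{\rm loc}(|x|^{n-d})$ regularity of $\mathcal A(\nabla w)$ near the origin; once the rigidity $W=\lambda\,\mathrm{Id}$ holds, the explicit profile is read off directly (by analysing the homogeneity of $\lambda$ via the Ricci identity), without a separate radial-ODE classification step.
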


As already mentioned, CKN inequalities reduce to Sobolev's inequality for $a=b=0$. When $p=2$, symmetry for positive critical points of Sobolev's inequality goes back to the fundamental papers \cite{CGS,GNN}. For a general $p \in (1,d)$, it has been proved in \cite{Sc,Vetois} and, more recently, in \cite{CFR} for general norms of $\R^d$ and in convex cones. 

We mention that the case $a=b=0$ could be included in Theorem  \ref{th_main_p}, even if in this case the solution \eqref{u_Talenti} is valid up to a translation. Indeed, as it is well-known, minimizers of Sobolev's inequality are the so-called Aubin-Talenti functions and it is easy to see that minimizers are unique up to multiplication by a constant, translations and scaling. When $a$ or $b$ do not vanish, minimizers have less degree of symmetry since the functional is invariant under rotations and reflections about the origin, and scalings. For this reason and since our approach is a generalization of \cite{CFR} to the present case, we prefer to state Theorem \ref{th_main_p} only for $(a,b) \neq (0,0)$.
 
When $(a,b) \neq (0,0)$, the presence of radial weights implies that a new crucial question must be addressed: for what parameters $a$ and $b$ is the radial symmetry of \eqref{CKN} inherited by minimizers of \eqref{CKN} or, more generally, by its critical points? Theorem \ref{th_main_p} partially answers to this question and it is optimal in some cases. Here we just mention that \eqref{cond_rad} holds for a large range of parameters, for instance whenever $a \geq 0$ and $p \geq 2$ (see also the more detailed discussion after Theorem \ref{main_thm_Riem}). As far as we know, there are some partial results on the rigidity of minimizers of \eqref{CKN} but few results are available for critical points when one considers $1<p<d$ (see \cite{Alvino1,DongLamLu,LamLu} and references therein). We also mention that some results on symmetry breaking for $p>1$ are obtained in \cite{ByeonWang, CaldiroliMusina,Nazarov,SmetsWillem}.

We notice that the approach used in this paper and the one in \cite{DEL} have in common the same starting point. When $p=2$, our approach can be compared to the one in \cite{DEL} and it can be seen that we use an equivalent reformulation of the problem in a Riemannian setting (the approach in \cite{DEL} uses a warped manifold setting). The main difference between the two approaches is that in \cite{DEL} the authors benefit of the linearity of the Laplace operator and, in particular, they make use of the Kelvin transform and of the spherical representation of the operator. Thanks to this setting, they are able to perform some further steps in the proof which lead to optimality.

\medskip

\noindent {\bf{Strategy of the proof.}  } Our approach is based on a reformulation of CKN inequalities in a suitable Riemannian manifold which gives a Sobolev type inequality on $\R^d$ with a weight $|x|^{n-d}$, and then to extend the approach contained in \cite{CFR} to the present situation.

Inspired by \cite{DEL}, we consider the change of variables $x \mapsto |x|^{\alpha-1} x$, with $\alpha \in \R$ which has to be determined later. By setting $w( |x|^{\alpha-1} x ) = u(x)$, CKN inequalities can be written as 
\begin{equation}
\label{ineq_w_pre_p}
\alpha^{\frac{1}{q}}\left( \int_{\R^d} w^q|x|^{\frac{d-bq}{\alpha}-d}dx \right )^{1/q}\leq C_{a,b}  \alpha^{\frac{1}{p}}\left( \int_{\R^d}  | A(x)D w|^p |x|^{\frac{d-ap+(\alpha-1)p}{\alpha}-d}dx\right )^{1/p},
\end{equation}
where
$$
A(x)= \left(\delta_{ij}+(\alpha-1)\frac{x_ix_j}{|x|^2}\right).
$$
We choose $\alpha$ such that 
$$
n:=\frac{d-bq}{\alpha}=\frac{d-ap+(\alpha-1)p}{\alpha},
$$
i.e. 
\begin{equation} \label{alpha_def}
\alpha=\frac{(1+a-b)(a_c-a)}{a_c-a+b}
\end{equation}
and 
\begin{equation} \label{n_def}
n=\frac{pq}{q-p}=\frac{d}{1+a-b}.
\end{equation}
Notice that $\alpha>0$. In this way, \eqref{ineq_w_pre_p} can be written as follows 

\begin{equation}
\label{ineq_w_p}
\alpha^{\frac 1 q-\frac 1 p }\left(\int_{\R^d} w^q|x|^{n-d} dx \right )^{1/q}\leq C_{a,b} \left(\int_{\R^d} | A(x)D w|^p |x|^{n-d}dx\right )^{1/p}.
\end{equation}
It will be convenient to consider $\R^d$ with the metric $g$ such that $g^{-1}= A^{T}A$, i.e. 
\begin{equation} \label{g_def}
g_{ij}=\delta_{ij}+\left (\frac 1{\alpha^2}-1\right )\frac{x_ix_j}{|x|^2}
\end{equation}
and hence
\begin{equation} \label{g_inverse}
g^{ij}=\delta_{ij}+\left (\alpha^2-1\right )\frac{x_ix_j}{|x|^2}\,.
\end{equation}
We notice that $g$ is smooth outside the origin, it is zero homogeneous and $\det(g)=\alpha^{-2}$ is constant. In this setting, CKN inequalities can be written as
\begin{equation}
\label{ineq_w_p_metric}
\left(\int_{\R^d} w^q|x|^{n-d} dV_g \right )^{1/q}\leq  C_{a,b} \left(\int_{\R^d} |\nabla w|_g^p\, |x|^{n-d}dV_g\right )^{1/p} \,,
\end{equation}
where $\nabla w$ is the gradient in the metric $g$. Inequality \eqref{ineq_w_p_metric} is the starting point for our analysis, since critical points of CKN-inequalities \eqref{ineq_w_p_metric} (and hence of \eqref{CKN}) can be seen as the solutions of
the Euler-Lagrange equation of \eqref{ineq_w_p_metric} (after an opportune re-normalization), which is given by
\begin{equation} \label{EL_w_p_II}
\frac{1}{|x|^{n-d} } \diver (|x|^{n-d}|\nabla w|_g^{p-2}\nabla w) + w^{q-1}=0 \,.
\end{equation} 
Here we recall that, since $\det(g)$ is constant, then $\diver X = \divg X$ for any vector field $X$ and for this reason we omit the dependency on $g$ in the divergence. Moreover, we look for a solution $w$ which is positive and belongs to the energy space $\mathcal{D}_g^{1,p}(\R^d,|x|^{n-d})$, i.e. such that
$$
\int_{\R^d} |\nabla w(x)|_g^p |x|^{n-d} dx < + \infty   \,.
$$ 
Notice that by CKN-inequality we also have that 
$$
\int_{\R^d} |w(x)|^q |x|^{n-d} dx < +\infty \,.
$$
Here and in the following, $Du$ denotes the Euclidean gradient and $\nabla u$ denotes the gradient in the Riemmanian manifold $(\R^d,g)$, with $g$ given by \eqref{g_def}.
Thanks to this new setting, Theorem \ref{th_main_p} is an immediate consequence of the following theorem.

\begin{theorem}\label{main_thm_Riem}
Let $(a,b)\neq (0,0)$, $d\geq 3$, $\alpha$ and $n$ be given by \eqref{alpha_def} and \eqref{n_def}, respectively, and assume that 
\begin{equation}\label{alpha_cond}
	\alpha \leq \sqrt{\frac{d-2}{n-2}} \,.
	\end{equation}
Let $w \in \mathcal{D}_g^{1,p}(\R^d,|x|^{n-d})$ be a positive solution of \eqref{EL_w_p_II}, and assume that one of the following conditions is satisfied
\begin{itemize}
\item[(H1)] $n=d$, i.e. $a=b$;  
\item[(H2)] $p<\frac{n}{2}$.
%
\end{itemize}
Then 
\begin{equation} \label{w_symmetry}
w(x)=\left( \frac{q}{p(p-1)^{p-1}}  \right)^{\frac{1}{q-p}} \left (\frac{(d-p(1+a))\lambda}{\lambda^p+c_2|x|^{\frac{p}{p-1} }}\right )^{\frac p{q-p}}
%
\end{equation}
for some $\lambda >0 $.
\end{theorem}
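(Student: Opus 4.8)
The plan is to transplant the integral-identity method of \cite{CFR} to the weighted Riemannian space $(\R^d,g,|x|^{n-d}\,dV_g)$ and to read \eqref{alpha_cond} as a curvature-positivity condition. First I would record the geometric content of the construction: in $g$-adapted polar coordinates one has $g=d\rho^2+\alpha^2\rho^2\,g_{S^{d-1}}$ with $\rho=|x|/\alpha$, while $|x|^{n-d}\,dV_g$ is a constant multiple of $\rho^{n-1}\,d\rho\,d\omega$. Thus $(\R^d,g,|x|^{n-d}\,dV_g)$ is, as a metric--measure space, the metric cone over the round sphere $S^{d-1}_\alpha$ of radius $\alpha$ endowed with its $n$-dimensional conical measure; and \eqref{alpha_cond} is exactly $\ric_{S^{d-1}_\alpha}\ge(n-2)\,g_{S^{d-1}_\alpha}$, i.e. the condition that this space be $\mathrm{CD}(0,n)$, equivalently that its Bakry--\'Emery $n$-Ricci tensor be nonnegative. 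Moreover $q=\frac{np}{n-p}$ is the critical Sobolev exponent associated with the dimension $n$, and \eqref{w_symmetry} is, up to a multiplicative constant, precisely characterized by the fact that $v:=w^{-(q-p)/p}$ is an affine function of $|x|^{p/(p-1)}$. So it suffices to show that $v$ has this form.

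Next I would establish the needed regularity and sharp asymptotics: by standard quasilinear theory $w\in C^{1,\gamma}_{\mathrm{loc}}(\R^d\setminus\{0\})$ and $w>0$, and using the scaling invariance of \eqref{EL_w_p_II}, Harnack's inequality and a Moser/De Giorgi iteration adapted to the weight, one obtains matching two-sided and gradient bounds for $w$ as $|x|\to0$ and as $|x|\to\infty$. These estimates serve only to guarantee that every boundary term appearing below on $\{|x|=\e\}$ and $\{|x|=R\}$ vanishes as $\e\to0$ and $R\to\infty$, so that the forthcoming identities hold on all of $\R^d$.

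The core step is the Bochner-type identity and the ensuing rigidity. A direct computation shows that $v=c_0\,w^{-(q-p)/p}$ satisfies $\frac{1}{|x|^{n-d}}\diver\!\big(|x|^{n-d}|\nabla v|_g^{p-2}\nabla v\big)=\big(C_1+C_2|\nabla v|_g^{p}\big)v^{-1}$ for explicit constants $C_1,C_2>0$. Multiplying this equation by a suitable power of $v$ (and by $\diver(\cdots)$), integrating against $|x|^{n-d}\,dV_g$, integrating by parts twice and invoking the weighted Bochner formula on $(\R^d,g,|x|^{n-d}\,dV_g)$ — whose Ricci contribution is nonnegative by Step~1 — I expect to arrive at an identity of the schematic form
\[
0=\int_{\R^d}|x|^{n-d}|\nabla v|_g^{p-2}\Big(\big|\mathring{\mathrm H}[v]\big|^2+\ric_n(\nabla v,\nabla v)+(\text{further nonnegative terms})\Big)\,dV_g ,
\]
where $\mathring{\mathrm H}[v]$ denotes the trace-free part of the second-order tensor naturally attached to the $p$-Laplace operator. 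Since every integrand is nonnegative, $\mathring{\mathrm H}[v]\equiv0$; analysing the level sets of $v$ then forces $v$ to depend only on $|x|$ and, more precisely, to be affine in $|x|^{p/(p-1)}$. Substituting the radial ansatz back into \eqref{EL_w_p_II} and imposing $v>0$ and $w\in\mathcal{D}^{1,p}_g(\R^d,|x|^{n-d})$ fixes all the constants and yields \eqref{w_symmetry}, the free parameter $\lambda>0$ reflecting the scaling invariance.

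The hypotheses (H1)--(H2) enter exactly at the integrations by parts of the core step: differentiating the term $|\nabla v|_g^{p}\,v^{-1}$ produces nonlinear quantities such as $|\nabla v|_g^{2p-2}v^{-2}$ together with a $p$-Hessian that must be integrable against $|x|^{n-d}\,dV_g$ up to the origin and whose cross-terms must have the right sign after Cauchy--Schwarz. When $n=d$ the weight is trivial and no restriction is needed; when $n>d$ the degeneracy $|x|^{n-d}$ at the origin is compensated precisely by $p<n/2$. The hard part will be making this core step rigorous for a genuinely quasilinear operator whose associated Hessian is defined only off the critical set $\{\nabla v=0\}$: one must truncate simultaneously near the origin (where the weight degenerates), near infinity, and near $\{\nabla v=0\}$, and absorb all error terms using the asymptotics of Step~2. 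Producing those sharp asymptotics and the degenerate-weight Bochner identity is where essentially all the work lies; once they are in place and \eqref{alpha_cond} is used through $\ric_n\ge0$, the remaining manipulations are essentially algebraic.
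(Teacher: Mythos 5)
Your overall strategy coincides with the paper's: pass to the pressure function $v=w^{-(q-p)/p}$, establish enough regularity and decay to integrate a Bochner-type differential identity against $|x|^{n-d}\,dV_g$, use Newton's inequality $\trace(W^2)\ge d^{-1}(\trace W)^2$ for $W=\nabla\mathcal A(\nabla v)$ together with the nonnegativity of the Ricci-plus-Hessian-of-weight contribution under \eqref{alpha_cond}, and read off rigidity from the equality case. Your geometric reading of \eqref{alpha_cond} --- that $(\R^d,g,|x|^{n-d}dV_g)$ is the metric-measure cone over $S^{d-1}_\alpha$ with $n$-dimensional cone measure, and that \eqref{alpha_cond} is precisely the $\mathrm{CD}(0,n)$ condition --- is a genuinely illuminating reformulation that the paper does not spell out, and $q=np/(n-p)$ being the $n$-critical exponent is exactly the heuristic behind the construction. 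You also correctly locate where (H1)/(H2) enter (the integrability of second-order quantities near the origin).

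However, the final rigidity step is asserted rather than proved, and this is where a genuine gap sits. From the vanishing of the integrand you only get $W_i^j=\l(x)\delta_i^j$ for some scalar $\l$; this does \emph{not} by itself imply that $v$ is radial about the origin. In the flat unweighted model ($\alpha=1$, $n=d$) it gives $\mathcal A(\nabla v)=\l\,(x-x_0)$ and hence $v=c_1+c_2|x-x_0|^{p/(p-1)}$ for an arbitrary $x_0$, so ``analysing the level sets of $v$'' is not enough. Moreover, the vanishing of the angular Ricci contribution which would directly force $\nabla v\parallel x$ is only available when \eqref{alpha_cond} holds strictly; at the borderline $\alpha^2=(d-2)/(n-2)$ that term in the integrand is identically zero and carries no information. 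The paper closes this gap by a Schur/Bianchi-type argument: differentiating $W=\l\,\Id$ and contracting gives $(d-1)\nabla_j\l=-R_{jk}\mathcal A^k(\nabla v)$, the explicit form \eqref{Ricci_jk} of $\ric_g$ shows $\nabla\l$ has no radial component, so $\l$ is zero-homogeneous (constant when $\alpha=1$); a further analysis using \eqref{eq_v_II} and the decay estimates then forces $\mathcal L v$ to be constant, yields $v=A|x+x_0|^{p/(p-1)}+B$, and finally plugging back into the weighted equation pins $x_0=0$. None of this is elementary ``level set'' analysis, and your sketch would need to supply it. A secondary, more technical point: your schematic sum-of-squares identity with prefactor $|\nabla v|^{p-2}$ is not quite the one that closes; the working multiplier is $v^{1-n}$ (i.e. $\gamma=1-n$ in \eqref{ident_final_generale}), and it is the choice of this exponent, combined with the auxiliary identity obtained by testing \eqref{eq_v_II} with $v^{-n}$, that makes the boundary terms vanish and the bulk integrand one-signed.
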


Now, we comment the assumptions in Theorem \ref{main_thm_Riem}. We first remark that assumptions (H1) and (H2) are probably technical, but we were not able to remove them. More precisely, due to the lack of regularity of the solution, we need the following integrability information:
\begin{equation} \label{tech_cond}
(n-d) \int_{B_1(O)} |\nabla w(x)|_g^{2(p-1)} |x|^{n-d-2} dx < +\infty.
\end{equation}
When $n=d$ this term disappears. Moreover, by using classical tools from elliptic regularity theory, in this case one can prove that $\nabla w$ is bounded, which immediately would give the desired integrability on $\nabla w$. It is expected that \eqref{tech_cond} still holds when $n-d$ is small following the approach in \cite{Stredulinsky}.
By using a scaling argument, we prove that $|\nabla w|_g \leq C/|x|$ as $x \to O$, and this implies \eqref{tech_cond} under the assumption (H2).

\begin{figure} \label{fig1}
\includegraphics[width=0.8\textwidth]{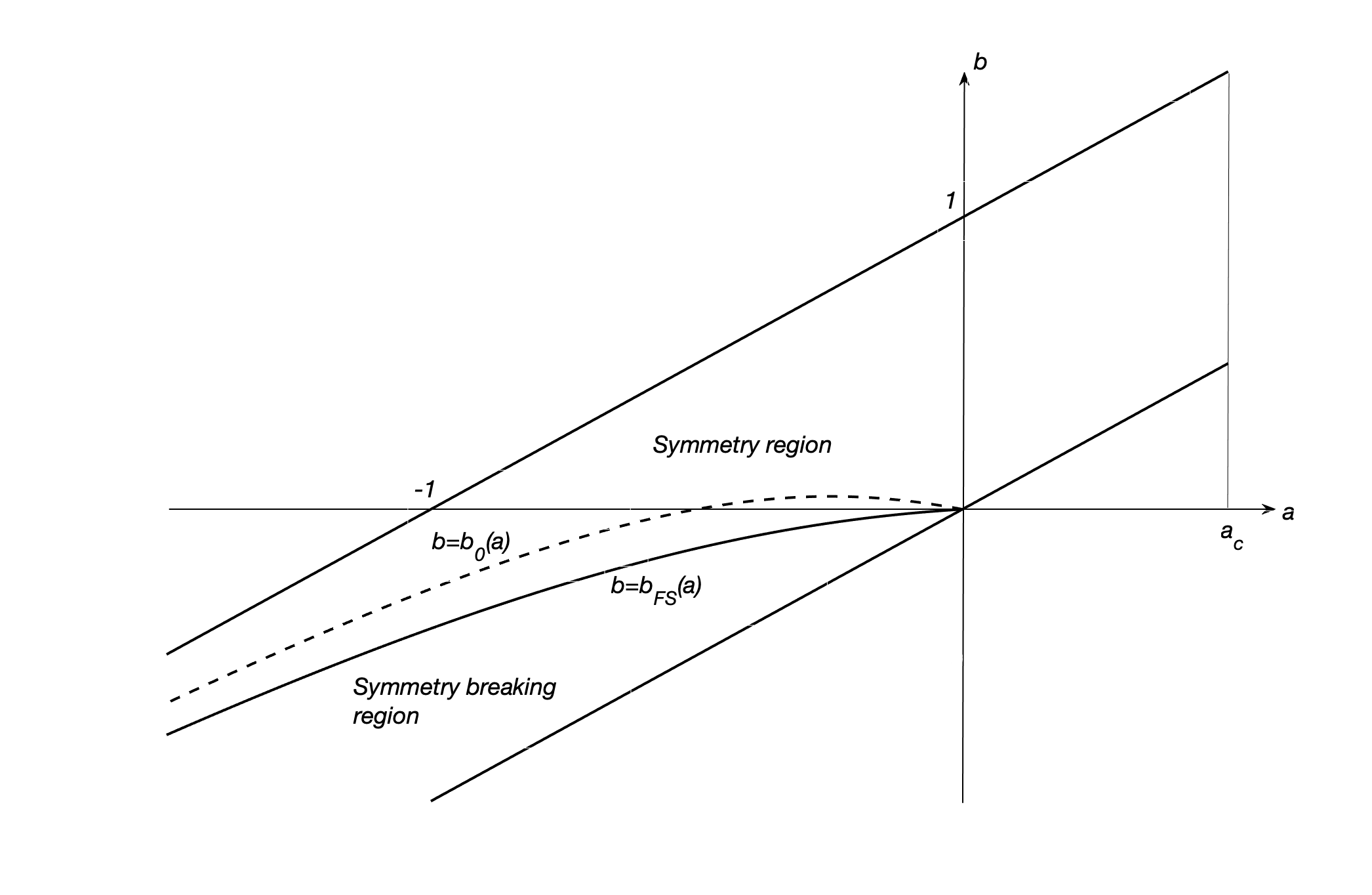}
\caption{In the case $p=2$ the optimal symmetry region is determined by the Felli-Schneider curve $b=b_{FS}(a)$. The symmetry region that corresponds to \eqref{alpha_cond} is the one between the dashed curve $b=b(a)$ and the line $b=a+1$.} 
\end{figure}

Now we comment condition \eqref{alpha_cond}. As we already mentioned, for $p=2$ it has been showed in \cite{DEL} that the optimal symmetry region is given by 
\begin{equation} \label{alpha_FS}
\alpha \leq \sqrt{\frac{d-1}{n-1}} \,.
\end{equation}
When $p=2$, the equality case in \eqref{alpha_FS} determines a curve $b=b_{FS}(a)$ (the Felli-Schneider curve) and a corresponding region where symmetry is broken, see Fig. \ref{fig1}, and it is sharp for $p=2$. The equality case in \eqref{alpha_cond} determines a curve that we denote by $b=b_0(a)$. 
We notice that \eqref{alpha_FS} and \eqref{alpha_cond} coincide for $n=d$, i.e. for  $a=b$, and hence in this case Theorem \ref{main_thm_Riem} is optimal (at least for $p=2$). For $a \neq b$, \eqref{alpha_cond} is stronger than \eqref{alpha_FS} and hence, even in the case $p=2$, we do not achieve the optimal region of symmetry for any possible range of the parameters $a$ and $b$ (see Fig. \ref{fig1}). Since the two conditions coincide for $a=b$ and comparing our proof to the one in \cite{DEL} we give the following conjecture:

\medskip

{\bf \emph{Conjecture:}} \emph{ for any $1<p<d$ the optimal symmetry region is given by \eqref{alpha_FS}}.

\medskip

%

\noindent The paper is organized as follows. In Section \ref{sect_Riem} we set the mathematical framework by providing details on the Riemannian reformulation of the problem and fix the notation. In Section \ref{sect_identity} we prove a differential identity which is at the starting point for proving Theorems \ref{th_main_p} and \ref{main_thm_Riem}. Due to the lack of regularity, the differential identity proved in Section \ref{sect_identity} must be used in an weaker integral formulation and for this reason we need some regularity results and asymptotic estimates which are proved in Sections \ref{sect_reg1} and \ref{section_reg_A}. In Section \ref{section_finalproof} we prove Theorems \ref{th_main_p} and \ref{main_thm_Riem}.

\section{The Riemannian setting} \label{sect_Riem}
In this section we give more details on the Riemmanian setting that we use in this paper. We consider a solution $u$ of \eqref{eq_main_u_p} and we change the coordinates as done for obtaining \eqref{ineq_w_pre_p}. This change of coordinates suggests to consider a new formulation of the problem in the Riemannian manifold $(\R^d,g)$, where $g$ is given by \eqref{g_def}:
\begin{equation*} 
g_{ij}=\delta_{ij}+\left (\frac 1{\alpha^2}-1\right )\frac{x_ix_j}{|x|^2} \,.
\end{equation*}
We notice that $g$ is zero-homogeneous and it has constant determinant 
$$
\det(g)=\alpha^{-2} \,.
$$
In this setting, finding a solution $u$ for \eqref{eq_main_u_p} is equivalent to find a solution $w$ (up to a multiplication constant) of the following problem
\begin{equation} \label{pb_w_Riem}
\begin{cases}
\frac{1}{|x|^{n-d} } \diver (|x|^{n-d}|\nabla w|_g^{p-2}\nabla w) + w^{q-1}=0 & \text{ in } \R^d \,,\\
w>0 \,, & \\
w \in \mathcal D^{1,p}_g(\R^d,|x|^{n-d}) \,,
\end{cases}
\end{equation} 
where $\mathcal D^{1,p}_g(\R^d,|x|^{n-d})$ is the completion of $C^\infty_0(\R^d)$ with respect to the norm 
$$
\|w\|_{\DD^{1,p}_g(\R^d,|x|^{-ap})}=\left (\int_{\R^d} |\nabla w|_g^p |x|^{n-d} dx \right )^{\frac 1 p}.
$$
Notice that, from CKN inequality \eqref{ineq_w_p_metric}, we also have that 
\begin{equation*}
\int_{\R^d} |w(x)|^q |x|^{n-d} dx < +\infty \,.
\end{equation*}
Before proceeding further, we clarify and set the notation.

\subsection{Notation}
Given a function $u:\R^d \to \R$ we denote by $Du=(\partial_j u)_{j=1,\ldots,d}$ and $D^2 u = (\partial_{ij} u)_{i,j=1,\ldots,d}$ the Euclidean gradient and the Euclidean Hessian of $u$, respectively. The Euclidean norm is denoted by $|\cdot|$ and $a\cdot b$ denotes the scalar product between two points $a$ and $b$ of $\R^d$. Given a vector field $F:\R^d \to \R^d$, $\diver F= \partial_i F^i$ is the divergence of $F$. Here and in all the paper the Einstein summation convention over repeated indices will be adopted. 

\medskip 

When we consider a function $w$ on the manifold $(\R^d,g)$, we use a different notation for its gradient. More precisely, we denote by $\nabla u = (\nabla^i u)_{i=1,\ldots,d}$ the Riemannian gradient of $u$ and recall that $\nabla u (x) =  g^{-1}(x) Du (x)$ for any $x \in \R^d$, where $g^{-1}=(g^{ij})_{i,j=1,\ldots,d}$ is the inverse matrix of $g$ given by \eqref{g_inverse}. 

It is readily seen that $\det{g}=\alpha^{-2}$ is constant: indeed the matrix $(g_{ij})_{i,j=1,\ldots,d}$  has $d-1$ eigenvalues equal to $1$ and one eigenvalue equals to $\alpha^{-2}$.
Since $\det(g)=\alpha^{-2}$ is constant, the volume element $dV_g$ is $\alpha^{-1} dx$.

We denote by $\diver_g(F)$ the divergence of a smooth vector field $F$ on $M$, that is, in
local coordinates
$$
\diver_g(F)=\frac{1}{\sqrt{|g|}}\partial_i(\sqrt{|g|}F^i)\, .
$$
As usual,  and $\Delta_g u = \nabla_i \nabla^i u$ is the Laplace-Beltrami
operator of $u$, where $\nabla_i$ is the covariant derivative, that is in local coordinates, 
\begin{align*}
\Delta_g u = \divg  \nabla u& =  \frac{1}{\sqrt{|g|}} \partial_i ( \sqrt{|g|} \nabla^i u ) =  \frac{1}{\sqrt{|g|}} \partial_i ( \sqrt{|g|} g^{ij} \partial_j  u )\, ,
\end{align*}
for all $u:\R^d\rightarrow\mathbb{R}$ smooth (say $C^2$). Notice that, since $\det(g)$ is constant, then
$$
\divg F = \diver F \ \ \text{ and } \ \ \Delta_g u =  \partial_i ( g^{ij} \partial_j  u ).
$$
Moreover, we write $g(\cdot,\cdot)$ for the scalar product induced by $g$, and we set 
$$ |\nabla u|_g^2 = g(\nabla u, \nabla u) = g^{ij} \, \pa_i u \, \pa_j u \,.$$  
The Hessian of $u$, when seen as a $(0,2)$-tensor, is defined as follows 
$$(\Hess_g u)_{ij} =  \partial_{ij} u - \Gamma^k_{\ ij} \ \partial_{k} u.$$ 
Given a $(0,2)$-tensor field $M$, 
we have that 
$\trace_g (M) = g^{ij}m_{ji}$ where  $M=(m_{ij})_{i,j=1,\ldots,d} $.

\medskip

\subsection{About the Riemmanian metric $g$}
Let $g$ be given by \eqref{g_def}. In terms of the coordinates of $\R^d$ 
the components of the Ricci tensor $\ric_g$ are given by 
\begin{align} \label{Ricci_jk}
R_{jk}=(1-\alpha^2)\frac{d-2}{|x|^2}\left (\delta_{jk}-\frac{x_jx_k}{|x|^2}\right ) 	\,,
\end{align}
and hence 
\begin{equation} \label{ricci}
\ric_g(\nabla v,\nabla v)=(1-\alpha^2)\frac{d-2}{|x|^2}\left (|\nabla v|^2-\frac{(\nabla v \cdot x)^2}{|x|^2}\right ) \,. 
\end{equation}
Notice that, by Cauchy-Schwarz inequality, $\ric_g(\nabla v,\nabla v) \geq 0$ when $\alpha^2 \leq 1$.

Since we are dealing with a weighted $p$-Laplace operator, with weight given by $|x|^{n-d}$, the gradient and the Hessian of the weight play a crucial role. Since 
$$
\frac{\partial^2}{\partial x_i \partial x_j}  |x|^{n-d} =(n-d)\frac{1}{|x|^2}(\delta_{ij}-2\frac{x_ix_j}{|x|^2})\,, 
$$
we have that 
\begin{equation} \label{hessiano}
\Hess_g(|x|^{n-d}) [\nabla v,\nabla v]=
\alpha^2\frac{n-d}{|x|^2}\left (|\nabla v|^2-\frac{(\nabla v\cdot x)^2}{|x|^2}\right )-(n-d) \frac{(\nabla v\cdot x)^2}{|x|^4} \,.
\end{equation}

\medskip

\subsection{The weighted $p$-Laplace operator}
As we have seen, we are dealing with a critical weighted $p$-Laplace type equation in the Riemannian manifold $(\R^d,g)$ given by \eqref{pb_w_Riem}. In order to lighten the notation, the weighted $p$-Laplace operator in the metric $g$ is sometimes denoted by
\begin{equation} \label{Lcal}
\mathcal{L} w :  = |x|^{-(n-d)}\diver (|x|^{n-d}|\nabla w|_g^{p-2}\nabla w) \,.
\end{equation}
The operator $\mathcal L$ is degenerate both for the presence of a $p$-Laplace type structure and for the presence of the weight that vanishes at the origin. 

A crucial quantity in our analysis is the \emph{stress} field $\mathcal A(\nabla v)$, which is defined by
\begin{equation} \label{mathcalA_def}
\mathcal A (\nabla w) = |\nabla w|_g^{p-2}\nabla w \,,
\end{equation}
and hence $\mathcal L w$ can be written as
$$
\mathcal{L} w :  = |x|^{-(n-d)}\diver (|x|^{n-d} \mathcal A (\nabla w) ) \,.
$$
Unless otherwise stated, $\mathcal A (\nabla w)$ is given by \eqref{mathcalA_def}. However, the operator $\mathcal L$ may be degenerate or singular at points where $\nabla w=0$ and then we need to argue by approximation and, for this reason, we are going to consider more general operators. A possible choice of the approximating stress field $\mathcal A_\ell $ is given by  the convolution of $\mathcal A$ with a family of radially symmetric smooth mollifiers $\{\phi_\ell\}$:
\begin{equation} \label{A_ell}
\mathcal A_\ell (\xi) : = (\mathcal A \ast \phi_\ell)(\xi) \,. 
\end{equation}
We will come back later on this approximation.

Forgetting for now the approximation problem, we clarify some notation and tools that we are going to use. We consider a function $V:T\R^d\to \R$ which is defined on the tangent bundle $T\R^d$. It is clear that $T\R^d= \R^d\times \R^d$ and we denote by $(x,\xi)$ a generic point in the tangent bundle, i.e. $x_1,\ldots,x_d$ are the coordinates on the manifold and $\xi_1,\ldots,\xi_d$ are the coordinates on the tangent space. Let $\mathcal{A} : \R^d\times \R^d \to \R^d \times \R^d$ be defined by
$$
\mathcal A^i (x,\xi) = \nabla_{g_x}^i V(x,\xi) := g_x^{ij} \partial_{\xi_j} V(x,\xi) \, \quad  i = 1,\ldots, d, \quad \text{for any } (x,\xi) \in \R^d \times \R^d \,. 
$$
In particular, if $B:[0,+\infty) \to \R$ is a smooth function and  
\begin{equation} \label{V_modxi}
V(x,\xi) = B(|\xi|_{g_x})\,,
\end{equation}
then
$$
\mathcal A (x,\xi) = \frac{B'(|\xi|_{g_x})}{|\xi|_{g_x}} \xi
$$
for any vector field $\xi$ in the tangent space at $x$. Notice that if $B(t)=t^p/p$ then $\mathcal{A} (x,\xi) = |\xi|_{g_x}^{p-2} \xi$ and $\diver \mathcal{A} (x,\nabla v) = \Delta_p v $\,.

In order to lighten the notation and whenever it does not create confusion, we omit the dependency on $x$ and we set
\begin{equation} \label{Axi}
\mathcal A(\xi) = \nabla V(\xi) 
\end{equation}
for any tangent vector $\xi$. It is clear that when we write \eqref{Axi} we mean that we are working on a fixed tangent space, so $x$ is fixed, and $\xi$ is a vector field belonging to the tangent space at $x$.  

We are going to evaluate $V$ and $\mathcal A$ at $\xi = \nabla v(x)$; in this case we have 
$$
\mathcal A(\nabla v(x)) = \nabla_\xi V(\nabla v(x)) \,,
$$
where the subscript $\xi$ emphasizes that the derivatives in $\nabla_\xi V$ are meant in the tangent space at $x$ (and not with respect to $x$, which is the case of $\nabla v$ without the subscript $\xi$). We also notice that if \eqref{V_modxi} holds then 
$$
\nabla_\xi^i \nabla_\xi^j V(\xi) = \left[ \frac{B''(|\xi|_{g_x})}{|\xi|_{g_x}^2} -  \frac{B'(|\xi|_{g_x})}{|\xi|_{g_x}^3} \right] \xi^i \xi^j + \frac{B'(|\xi|_{g_x})}{|\xi|_{g_x}} \delta^{ij} \,,
$$
and for $B(t)=t^p/p$ we have
$$
\nabla_\xi^i \nabla_\xi^j V(\xi) = (p-2) |\xi|_{g_x}^{p-4} \xi^i \xi^j + |\xi|_{g_x}^{p-2} \delta^{ij} \,.
$$

\section{A crucial differential identity} \label{sect_identity}
In this section we prove a differential identity which will be crucial in the rest of the paper.
We first prove a preliminary lemma.

\begin{lemma} \label{lemma_riemann}
Let $v: \mathbb{R}^d \to \mathbb{R}$ and $V: \R^d \times \mathbb{R}^d \to \mathbb{R}$ be smooth functions and set $\mathcal A(\nabla v(x)) = \nabla_\xi V(\nabla v(x))$. For any $x \in \R^d$ we have
\begin{multline} \label{crucial1}
 \nabla_j \left( \mathcal{A}^i (\nabla v(x))\nabla_i \mathcal{A}^j (\nabla v(x))\right)  =     \nabla_j \mathcal{A}^i (\nabla v(x))\nabla_i \mathcal{A}^j (\nabla v(x))+  \\ (\nabla_j\nabla_i \mathcal{A}^i(\nabla v(x)))\mathcal{A}^j (\nabla v(x))+  (\nabla_\xi^k \nabla_\xi^i V(\nabla v(x)) ) \nabla_\xi^j V(\nabla v(x)) R_{\ell kij}(x) \nabla^\ell v \,,
\end{multline}
where $R_{\ell kij}$ is the Riemann curvature tensor. In particular, if $V$ is as in \eqref{V_modxi} then we have that
\begin{multline} \label{crucial2}
 \nabla_j \left( \mathcal{A}^i (\nabla v(x))\nabla_i \mathcal{A}^j (\nabla v(x))\right)  =     \nabla_j \mathcal{A}^i (\nabla v(x))\nabla_i \mathcal{A}^j (\nabla v(x))+  \\ (\nabla_j\nabla_i \mathcal{A}^i(\nabla v(x)))\mathcal{A}^j (\nabla v(x))+ {\rm Ric}(\mathcal A (\nabla v), \mathcal A (\nabla v)) \,. \end{multline}
\end{lemma}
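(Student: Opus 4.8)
The plan is to prove \eqref{crucial1} by a direct computation using the Bochner-type commutation of covariant derivatives, and then to specialize to the case \eqref{V_modxi} to obtain \eqref{crucial2}. First I would apply the Leibniz rule to the left-hand side, writing
$$
\nabla_j\left(\mathcal A^i(\nabla v)\nabla_i\mathcal A^j(\nabla v)\right) = \nabla_j\mathcal A^i(\nabla v)\,\nabla_i\mathcal A^j(\nabla v) + \mathcal A^i(\nabla v)\,\nabla_j\nabla_i\mathcal A^j(\nabla v).
$$
This already reproduces the first term on the right-hand side, so the whole content of the lemma is to rewrite the second term $\mathcal A^i\,\nabla_j\nabla_i\mathcal A^j$ as $(\nabla_j\nabla_i\mathcal A^i)\mathcal A^j$ plus the curvature correction. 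The key step is the commutation identity for the second covariant derivatives of the vector field $\mathcal A(\nabla v)$: for a $(1,0)$-tensor $X^j$ one has $\nabla_j\nabla_i X^j - \nabla_i\nabla_j X^j = R^{\,j}_{\ kji}X^k = R_{ki}X^k$ (up to a sign convention), but here I need the finer form before tracing, because $\mathcal A^j$ depends on $x$ both explicitly and through $\nabla v(x)$. So I would instead write $\mathcal A^j(\nabla v(x))$ as a composition, compute $\nabla_i\mathcal A^j$ via the chain rule $\nabla_i\mathcal A^j = (\nabla_\xi^k\mathcal A^j)(\nabla_i\nabla^k v) = (\nabla_\xi^k\nabla_\xi^j V)(\Hess v)_{i}^{\ k}$ (plus the explicit $x$-dependence of $V$, which on a parallel frame at the point can be arranged to contribute only through the metric), and then commute the outer $\nabla_j$ with $\nabla_i$ acting on the tensorial object, picking up exactly one Riemann term $R_{\ell kij}\nabla^\ell v$ contracted against $(\nabla_\xi^k\nabla_\xi^i V)\nabla_\xi^j V$.

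The cleanest way to organize this is to fix $x$, choose normal coordinates (or a synchronous/parallel orthonormal frame) at $x$ so that $\Gamma^k_{\ ij}(x)=0$ and $\partial g(x)=0$, and carry out the computation there; since both sides of \eqref{crucial1} are tensorial the identity then holds in any coordinates. In such a frame the explicit $x$-dependence of $V$ through $g_x$ drops out at the point $x$, so $\nabla_\xi^k\nabla_\xi^i V$ is evaluated as an ordinary second $\xi$-derivative. Then $\nabla_j\nabla_i\mathcal A^j(\nabla v) = \partial_j\partial_i\mathcal A^j(\nabla v)$ at $x$, and the mismatch between $\partial_j\partial_i$ and $\partial_i\partial_j$ acting through the third derivative $\nabla_i\nabla_j\nabla^\ell v$ (which is where the Ricci identity for the Hessian of $v$ enters, $\nabla_i\nabla_j\nabla^\ell v - \nabla_j\nabla_i\nabla^\ell v = R^{\ \ell}_{k\ ij}\nabla^k v$) produces the stated curvature term. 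I would track the indices carefully: the term is $(\nabla_\xi^k\nabla_\xi^i V)(\nabla_\xi^j V)\,R_{\ell kij}\nabla^\ell v$, matching the symmetries $R_{\ell kij}=-R_{k\ell ij}=-R_{\ell k ji}$ against the symmetry of $\nabla_\xi^k\nabla_\xi^i V$ in $(k,i)$ — and I should double-check that the antisymmetry in $(i,j)$ of the Riemann tensor is precisely what is left after the symmetric Hessian terms cancel.

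For the specialization \eqref{crucial2}: when $V(x,\xi)=B(|\xi|_{g_x})$ we have the explicit formula for $\nabla_\xi^k\nabla_\xi^i V$ recorded in the excerpt, and $\nabla_\xi^j V = \mathcal A^j$. Plugging these into the curvature term, the contraction $(\nabla_\xi^k\nabla_\xi^i V)R_{\ell kij}\nabla^\ell v$ involves two pieces: the $\delta^{ki}$-piece, which upon contracting $R_{\ell kij}\delta^{ki}=0$ by antisymmetry of $R$ in its first two slots, and the $\xi^k\xi^i$-piece, which by antisymmetry of $R$ in $(k,i)$ also vanishes — so one might worry the whole term dies. The resolution is that $\mathcal A^j = B'(|\xi|)|\xi|^{-1}\xi^j \propto \nabla^j v$, so the full contraction reduces (after using $R_{\ell kij}\nabla^\ell v\,\nabla^k v$ type terms, which vanish, against the $\xi\otimes\xi$ part, and keeping the $\delta$ part paired with $\mathcal A\otimes\mathcal A$) to $\frac{B'(|\xi|)}{|\xi|}R_{\ell i}\,\mathcal A^\ell\mathcal A^i / (\text{normalization})$, i.e. exactly $\mathrm{Ric}(\mathcal A(\nabla v),\mathcal A(\nabla v))$ after using $R^{\ \ell}_{k\ ij}$ traced appropriately and the proportionality $\mathcal A\parallel\nabla v$. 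I expect the main obstacle to be precisely this bookkeeping — making sure that after the antisymmetrizations the surviving contraction is the Ricci tensor contracted with $\mathcal A\otimes\mathcal A$ and not with $\nabla v\otimes\mathcal A$ or with the wrong coefficient; the proportionality $\mathcal A(\nabla v)=|\nabla v|_g^{p-2}\nabla v$ between the stress field and the gradient is what makes everything collapse to the clean Ricci form, and I would state that reduction explicitly rather than leave it implicit.
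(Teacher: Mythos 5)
Your plan for \eqref{crucial1} matches the paper's proof: apply Leibniz to the left-hand side, then rewrite $\mathcal A^i\nabla_j\nabla_i\mathcal A^j$ in terms of $(\nabla_j\nabla_i\mathcal A^i)\mathcal A^j$ plus a curvature correction, working in normal coordinates at the point and using the chain rule together with the Ricci commutation identity for the third derivatives of $v$. That part is sound.

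The derivation of \eqref{crucial2}, however, contains concrete index errors. Writing $\nabla_\xi^k\nabla_\xi^i V = C\,\xi^k\xi^i + \frac{B'}{|\xi|}\delta^{ki}$ with $C=\frac{B''}{|\xi|^2}-\frac{B'}{|\xi|^3}$, and evaluating the curvature term $(\nabla_\xi^k\nabla_\xi^iV)(\nabla_\xi^jV)R_{\ell kij}\nabla^\ell v$ at $\xi=\nabla v$, the correct bookkeeping is the opposite of what you claim. The $\delta^{ki}$-piece does \emph{not} vanish: $\delta^{ki}$ contracts slots $2$ and $3$ of $R_{\ell kij}$, not the antisymmetric pair $(\ell,k)$, so $\delta^{ki}R_{\ell kij}$ is (with the paper's sign convention inferred from $v_{iki}-v_{iik}=\mathrm{Ric}_{ik}v^i$) precisely $\mathrm{Ric}_{\ell j}$; combined with $\nabla_\xi^jV=\mathcal A^j$ and $\nabla^\ell v\cdot\frac{B'}{|\xi|}=\mathcal A^\ell$, this piece is exactly $\mathrm{Ric}(\mathcal A(\nabla v),\mathcal A(\nabla v))$ — it is the surviving term, not a vanishing one. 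Conversely, the $\xi^k\xi^i$-piece does vanish, but not "by antisymmetry of $R$ in $(k,i)$" (the Riemann tensor has no such symmetry); it vanishes because it contains the factor $v^\ell v^k R_{\ell kij}$, and $v^\ell v^k$ is symmetric in $(\ell,k)$ while $R_{\ell kij}$ is antisymmetric in its first pair. Your subsequent "resolution" paragraph is then built on the wrong premise and the sketch of how the Ricci tensor reappears is not coherent. The correct short argument is: $\xi^k\xi^i$-piece kills by first-pair antisymmetry; $\delta^{ki}$-piece is $\mathrm{Ric}_{\ell j}\mathcal A^\ell\mathcal A^j$; done. You should also state, rather than leave implicit, that you are using the fact that for $V=B(|\xi|_{g_x})$ one has $\mathcal A^j\propto\nabla^j v$, so that the $\nabla^\ell v$ appearing in the curvature term can be rewritten as $\mathcal A^\ell$ up to the scalar factor that is absorbed.
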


\begin{proof}
Before starting the proof, we make some remarks on the notation and the conventions we are going to use in this proof. Whenever it does not create confusion and aiming at lightening the notation, we omit the dependency of $\mathcal{A}(\nabla v(x))$ and $V(\nabla v(x))$ on $\nabla v(x)$. In order to make the calculation easier we consider a normal coordinate system centered at the point under consideration. In this setting, the commutation rule allows us to interchange the indexes in the third derivatives and we have
\begin{align}
	v_{ij}=v_{ji}, \qquad v_{ijk}-v_{ikj}=-R_{\ell ijk}v^\ell\\
	v_{kii}=v_{iki}=v_{iik}+{\rm Ric}_{ik}v^i,
	\end{align}
	where we used the convetions $v^\ell= \nabla^\ell v$,  $v_{ij}=\nabla_j \nabla_i v$, $v_{ijk}=\nabla_k \nabla_j \nabla_i v$, and $R $ and ${\rm Ric}$ are the Riemann curvature and Ricci tensors, respectively. 	
	
The starting point of this proof is the following identity:
\begin{equation} \label{888_proof}
(\nabla_j\nabla_i \mathcal{A}^i)\mathcal{A}^j -(\nabla_j\nabla_i \mathcal{A}^j)\mathcal{A}^i =
  - (\nabla_\xi^k \nabla_\xi^i V ) \nabla_\xi^j V R_{\ell kij} v^\ell \,.
\end{equation}
In order to prove \eqref{888_proof} we notice that, since $A^j = \nabla^j_\xi V$, we have
\begin{equation*}
\begin{split}
\nabla_j\nabla_i \mathcal{A}^j (\nabla v) & = \nabla_j\nabla_i \nabla^j_\xi V(\nabla v) \\
& = \nabla_j ( \nabla_\xi^k \nabla_\xi^j V (\nabla v) v_{ki}) \\
& = (\nabla_\xi^\ell \nabla_\xi^k \nabla_\xi^j V (\nabla v)) v_{ki} v_{\ell j} +  (\nabla_\xi^k \nabla_\xi^j  V (\nabla v)) v_{kij} 
\end{split}
\end{equation*}
and
\begin{equation*}
\begin{split}
\nabla_j\nabla_i \mathcal{A}^i  (\nabla v) & = \nabla_j\nabla_i \nabla_{\xi}^i V (\nabla v)  \\
& = \nabla_j ( \nabla_{\xi}^k \nabla_{\xi}^iV (\nabla v) v_{ki}) \\
& = (\nabla_{\xi}^\ell \nabla_{\xi}^ k \nabla_{\xi}^iV (\nabla v)) v_{ki} v_{\ell j} +  (\nabla_{\xi}^k \nabla_{\xi}^i V (\nabla v)) v_{kij} \,,
\end{split}
\end{equation*}
and hence
\begin{equation*}
\begin{split}
(\nabla_j\nabla_i \mathcal{A}^i)\mathcal{A}^j -(\nabla_j\nabla_i \mathcal{A}^j)\mathcal{A}^i& = (\nabla_{\xi}^\ell \nabla_{\xi}^ k \nabla_{\xi}^iV ) \, (\nabla_{\xi}^j V) v_{ki} v_{\ell j} +  (\nabla_{\xi}^k \nabla_{\xi}^i V) \, (\nabla_{\xi}^j V) v_{kij} \\ 
& \quad -  (\nabla_\xi^\ell \nabla_\xi^k \nabla_\xi^j V)\, (\nabla_\xi^i V) v_{ki} v_{\ell j} -  (\nabla_\xi^k \nabla_\xi^j)  V \, (\nabla_\xi^i V )v_{kij}  \\
& =   (\nabla_\xi^k \nabla_\xi^i V)   \, (\nabla_\xi^j V)  v_{kij}  -  (\nabla_\xi^k \nabla_\xi^j V) \, (\nabla_\xi^i V) v_{kij}  \\
& =  (\nabla_\xi^k \nabla_\xi^i V)   \, (\nabla_\xi^j V)  v_{kij}  -  (\nabla_\xi^k \nabla_\xi^i V) \, (\nabla_\xi^j V) v_{kji} \,,
\end{split}
\end{equation*}
where in the last two equalities we simplified two terms and rearranged the indices.  
%
Since $v_{kij}-v_{kji}=-R_{\ell kij}v^\ell$, we have \eqref{888_proof}. From 
\begin{equation*} 
 \nabla_j \left( \mathcal{A}^i \nabla_i \mathcal{A}^j \right) =    \nabla_j \mathcal{A}^i \nabla_i \mathcal{A}^j + \mathcal{A}^i \nabla_j\nabla_i\mathcal{A}^j  \,.
\end{equation*}
and by using \eqref{888_proof}, we obtain \eqref{crucial1}. Identity \eqref{crucial2} immediately follows from \eqref{crucial1}.
\end{proof}

The following proposition contains the crucial identity that we are going to use in the proof of Theorems \ref{th_main_p} and \ref{main_thm_Riem} (see \cite{BiCi,BCS} for an analogous identity in the Euclidean case without weight). We prove such identity in a more general context, which may be useful for future investigations. More precisely, we consider the following operator
\begin{equation} \label{Lf_general}
\mathcal{L}_f v  = e^{f} \diver (e^{-f} \mathcal{A}(\nabla v) ) \,,
\end{equation}
where the weight $e^{-f}$ and the field $\mathcal{A}$ are smooth. It is clear that if $e^{-f}=|x|^{n-d}$ and $\mathcal{A}(\xi)=|\xi|_g^{p-2}\xi$ then $\mathcal{L}_f = \mathcal L$ is the weighted $p$-Laplace operator defined in \eqref{Lcal} (apart from the smoothness issue at the origin).

\begin{proposition} \label{prop_diff_id}
Let $f,v: \mathbb{R}^d \to \mathbb{R}$ and $V: \R^d \times \mathbb{R}^d \to \mathbb{R}$ be smooth functions. Let $\mathcal A(\xi) = \nabla V(\xi)$ as in \eqref{Axi}, and let $\mathcal{L}_f$ be given by \eqref{Lf_general}. For any $x \in \R^d$ we set $W(x)=(W_j^i(x))_{i,j=1,\ldots,d}$, where
$$
W_j^i (x) = \nabla_j \mathcal A^i (\nabla v(x)) \,, \quad i,j=1,\ldots,d.
$$ 
We have the following differential identity 
	\begin{multline} \label{ident_final_generale}
	e^f \diver \left( -e^{-f} v^\gamma \mathcal{A}^i(\nabla v) \nabla_i \mathcal{A}^j(\nabla v) + v^\gamma  e^{-f} \mathcal{A}^j(\nabla v) \mathcal{L}_fv  + \gamma (p-1) v^{\gamma-1} V(\nabla v)\, e^{-f}\mathcal{A}^j(\nabla v) \right) \\
	= v^{\gamma} \left[ (\mathcal{L}_f v)^2 - \trace (W^2) \right] + \gamma v^{\gamma-1}[(p-1) V(\nabla v) + \mathcal{A}^j(\nabla v) \nabla_j v] \mathcal{L}_f v \\ + \gamma(\gamma-1) (p-1) v^{\gamma-2} V(\nabla v) \mathcal A^j(\nabla v) \nabla_j v 
	\\ + \gamma v^{\gamma-1} \mathcal A^i(\nabla v) \nabla_i (p V(\nabla v)  - \mathcal A^j (\nabla v) \nabla_j v) \\ - \Hess f ( \mathcal A(\nabla v),\mathcal A(\nabla v)) - \nabla_\xi^k\nabla_\xi^i V (\nabla v) \mathcal A^j(\nabla v) R_{\ell k i j } \nabla^\ell v \,. 	
	\end{multline}
In particular, if $V(\nabla v) = \frac{1}{p} |\nabla v|_g^p$ then \eqref{ident_final_generale} is
		\begin{multline} \label{ident_final}
	e^f \diver \left( -e^{-f} v^\gamma \mathcal{A}^i(\nabla v) \nabla_i \mathcal{A}^j(\nabla v) + v^\gamma  e^{-f} \mathcal{A}^j(\nabla v) \mathcal{L}v  + \gamma\frac{p-1}{p} v^{\gamma-1} |\nabla v|_g^p\, e^{-f}\mathcal{A}^j(\nabla v) \right) \\
	= v^{\gamma} \left[ (\mathcal{L} v)^2 - \trace (W^2) \right] + \gamma \frac{2p-1}{p} v^{\gamma-1} |\nabla v|_g^p \mathcal{L}v + \gamma(\gamma-1)\frac{p-1}{p} v^{\gamma-2} |\nabla v|_g^{2p} \\
	- v^{\gamma} |\nabla v|_g^{2(p-2)}  {\rm Ric}(\nabla v,\nabla v) - v^{\gamma} |\nabla v|_g^{2(p-2)}\Hess f  (\nabla v,\nabla v) \,.
	\end{multline}
	\end{proposition}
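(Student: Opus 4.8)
The plan is to start from the pointwise product rule
\[
e^f \diver\!\big( e^{-f} X^j \big) = \diver X^j - X^j \,\partial_j f
\]
applied to the vector field appearing inside the divergence in \eqref{ident_final_generale}, namely
\[
X^j = -v^\gamma \mathcal{A}^i(\nabla v)\nabla_i\mathcal{A}^j(\nabla v) + v^\gamma \mathcal{A}^j(\nabla v)\mathcal{L}_f v + \gamma(p-1) v^{\gamma-1} V(\nabla v)\,\mathcal{A}^j(\nabla v),
\]
and then to expand each of the three groups of terms using the Leibniz rule, the definition $\mathcal{L}_f v = \diver\mathcal A(\nabla v) - \mathcal A^j(\nabla v)\,\partial_j f$, and Lemma \ref{lemma_riemann}. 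The role of Lemma \ref{lemma_riemann} is precisely to handle the term $\diver\!\big(v^\gamma \mathcal{A}^i(\nabla v)\nabla_i\mathcal{A}^j(\nabla v)\big)$: differentiating the product gives a $\gamma v^{\gamma-1}\nabla_j v\,\mathcal{A}^i\nabla_i\mathcal{A}^j$ piece plus $v^\gamma\nabla_j(\mathcal{A}^i\nabla_i\mathcal{A}^j)$, and identity \eqref{crucial1} rewrites the latter as $\trace(W^2) + (\nabla_j\nabla_i\mathcal{A}^i)\mathcal{A}^j$ plus the curvature term $\nabla_\xi^k\nabla_\xi^i V\,\nabla_\xi^j V\,R_{\ell kij}\nabla^\ell v$, which is exactly the last term of \eqref{ident_final_generale}. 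The $(\nabla_j\nabla_i\mathcal{A}^i)\mathcal{A}^j$ contribution is $\mathcal{A}^j\nabla_j(\mathcal{L}_f v + \mathcal{A}^i\partial_i f) = \mathcal{A}^j\nabla_j\mathcal{L}_f v + \Hess f(\mathcal{A},\mathcal{A}) + (\text{terms with }\nabla\mathcal A)$, producing the $-\Hess f(\mathcal{A},\mathcal{A})$ term after the sign flip.

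Next I would expand the remaining two groups. Differentiating $v^\gamma \mathcal{A}^j(\nabla v)\mathcal{L}_f v$ produces $\gamma v^{\gamma-1}\nabla_j v\,\mathcal{A}^j\mathcal{L}_f v$, $v^\gamma(\nabla_j\mathcal{A}^j)\mathcal{L}_f v$, and $v^\gamma\mathcal{A}^j\nabla_j\mathcal{L}_f v$; here one uses $\nabla_j\mathcal{A}^j = \mathcal{L}_f v + \mathcal{A}^j\partial_j f = \trace W + \mathcal{A}^j\partial_j f$ so that $v^\gamma(\nabla_j\mathcal{A}^j)\mathcal{L}_f v$ contributes the $v^\gamma(\mathcal{L}_f v)^2$ term (together with a piece that cancels against part of the $\Hess f$ bookkeeping from the first group, or rather reassembles into it). Differentiating the Euler-term $\gamma(p-1)v^{\gamma-1}V(\nabla v)\mathcal{A}^j(\nabla v)$ gives $\gamma(\gamma-1)(p-1)v^{\gamma-2}\nabla_j v\,V\,\mathcal{A}^j$, $\gamma(p-1)v^{\gamma-1}(\nabla_j V)\mathcal{A}^j$, and $\gamma(p-1)v^{\gamma-1}V\nabla_j\mathcal{A}^j$; the last of these again uses $\nabla_j\mathcal{A}^j = \mathcal{L}_f v + \mathcal{A}^j\partial_j f$. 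The bookkeeping to watch is the grouping of the $\gamma v^{\gamma-1}$ terms: one wants to collect $\gamma v^{\gamma-1}[(p-1)V + \mathcal{A}^j\nabla_j v]\mathcal{L}_f v$, then the $\gamma v^{\gamma-1}\mathcal{A}^i\nabla_i(pV - \mathcal{A}^j\nabla_j v)$ term, which requires recognizing that $\mathcal{A}^i\nabla_i\mathcal{A}^j\cdot\nabla_j v$ and $\mathcal{A}^i\nabla_i V$ and $\mathcal{A}^j\nabla_j\mathcal{L}_f v$ recombine in precisely this pattern after using $V = \nabla V\cdot$ relations — in particular $\nabla_i V(\nabla v) = \mathcal A^k(\nabla v) v_{ki}$ and $\nabla_i(\mathcal A^j\nabla_j v) = (\nabla_i\mathcal A^j)\nabla_j v + \mathcal A^j v_{ji}$.

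For the specialization to $V(\nabla v) = \frac1p|\nabla v|_g^p$, I would simply substitute: then $\mathcal{A}(\nabla v) = |\nabla v|_g^{p-2}\nabla v$, $pV - \mathcal{A}^j\nabla_j v = |\nabla v|_g^p - |\nabla v|_g^p = 0$ so the term $\gamma v^{\gamma-1}\mathcal{A}^i\nabla_i(pV - \mathcal{A}^j\nabla_j v)$ vanishes, $(p-1)V + \mathcal{A}^j\nabla_j v = \frac{p-1}p|\nabla v|_g^p + |\nabla v|_g^p = \frac{2p-1}p|\nabla v|_g^p$, $(p-1)V\,\mathcal{A}^j\nabla_j v = \frac{p-1}p|\nabla v|_g^{2p}$, and by \eqref{crucial2} the curvature term becomes $v^\gamma|\nabla v|_g^{2(p-2)}\ric(\nabla v,\nabla v)$ while $\Hess f(\mathcal{A},\mathcal{A}) = |\nabla v|_g^{2(p-2)}\Hess f(\nabla v,\nabla v)$; collecting these yields \eqref{ident_final}. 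The main obstacle I anticipate is purely organizational rather than conceptual: keeping track of the many $\nabla\mathcal A$, $\Hess f$, and $v^{\gamma-1}$ terms so that they cancel or regroup into the claimed compact form, and making sure the normal-coordinate simplifications (so that $\Gamma^k_{ij}$ vanishes at the base point and $\nabla_j = \partial_j$ there) are applied consistently, since the identity is pointwise and coordinate-free but the computation is done in normal coordinates.
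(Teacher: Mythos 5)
Your proposal is correct and follows essentially the same route as the paper's proof: expand the divergence of each of the three pieces of the vector field via the Leibniz rule and the relation $\nabla_i\mathcal A^i = \mathcal L_f v + f_i\mathcal A^i$, invoke Lemma \ref{lemma_riemann} (identity \eqref{crucial1}) to produce $\trace(W^2)$ and the curvature term from $\diver(v^\gamma\mathcal A^i\nabla_i\mathcal A^j)$, and then recombine the $\gamma v^{\gamma-1}$ terms into $\gamma v^{\gamma-1}\mathcal A^i\nabla_i(pV-\mathcal A^j v_j)$. The paper's bookkeeping is superficially different (it keeps $\nabla_j\nabla_i\mathcal A^i$ uncancelled between the first and second groups and isolates $\Hess f$ via the auxiliary identity $\mathcal A^i f_j\nabla_i\mathcal A^j - \nabla_j(f_i\mathcal A^i)\mathcal A^j = -f_{ij}\mathcal A^i\mathcal A^j$, rather than expanding $\nabla_i\mathcal A^i$ inside the first group as you do), but these are algebraically equivalent rearrangements and your specialization to $V=\tfrac1p|\nabla v|_g^p$ matches the paper's exactly.
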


\begin{proof}
As in the proof of Lemma \ref{lemma_riemann} we omit the dependency of $\mathcal A$ on $\nabla v$. From Lemma \ref{lemma_riemann} we have that	
\begin{multline} \label{id1}
	e^f \nabla_j \left( -e^{-f} v^\gamma \mathcal{A}^i \nabla_i \mathcal{A}^j \right) = 
	v^\gamma  \mathcal{A}^i f_j \nabla_i  \mathcal{A}^j - \gamma v^{\gamma-1} v_j \mathcal{A}^i\nabla_i\mathcal{A}^j  - v^{\gamma} \nabla_j \mathcal{A}^i \nabla_i \mathcal{A}^j  \\ - v^\gamma \left(    \nabla_j \mathcal{A}^i (\nabla v(x))\nabla_i \mathcal{A}^j (\nabla v(x))+  (\nabla_j\nabla_i \mathcal{A}^i(\nabla v(x)))\mathcal{A}^j (\nabla v(x))+ {\rm Ric}(\mathcal A (\nabla v), \mathcal A (\nabla v)) \right)  \,.
	\end{multline}
A straightforward computation yields
	\begin{equation} \label{id2}
	\begin{split}
	e^f \nabla_j \left( v^\gamma e^{-f} \mathcal{A}^j \mathcal L_f v \right) & = \gamma v^{\gamma-1}\mathcal{A}^j v_j \mathcal L_f v + v^\gamma  \nabla_j(\mathcal L_fv) \mathcal{A}^j + v^\gamma (\mathcal L_f v)^2 \\ 
	& = \gamma v^{\gamma-1} \mathcal{A}^j v_j \mathcal L_fv + v^\gamma \mathcal{A}^j \nabla_j\nabla_i\mathcal{A}^i  - v^\gamma \nabla_j( f_i \mathcal{A}^i) \mathcal{A}^j + v^\gamma (\mathcal L_f v)^2\,,
	\end{split}
	\end{equation}
	and
	\begin{multline} \label{id3}
	e^f \nabla_j \left( \gamma (p-1) v^{\gamma-1} V \, e^{-f} \mathcal{A}^j \right) 
	= \gamma (\gamma-1)(p-1) v^{\gamma-2} V  \mathcal{A}^j v_j \\ + \gamma(p-1) v^{\gamma-1}  \mathcal{A}^j  \nabla_j V  + \gamma (p-1) v^{\gamma-1} V  \mathcal L_f v \,,
	\end{multline}
where in \eqref{id2} we have used that $\mathcal L_f v = \nabla_i \mathcal A^i - f_i \mathcal A^i $.

Now we notice that
	\begin{equation*} 	
	\begin{split}
	\nonumber (p-1) \nabla_j V  \mathcal{A}^j-(\nabla_i \mathcal{A}^j)\mathcal{A}^iv_j & =(p-1) \nabla_j V  \mathcal{A}^j -\nabla_i (\mathcal{A}^jv_j)\mathcal{A}^i+\mathcal{A}^j\mathcal{A}^iv_{ji}	\\
	& = p \nabla_j V  \mathcal{A}^j-\mathcal A^i \mathcal{A}^j v_{ji}-\nabla_i (\mathcal{A}^jv_j)\mathcal{A}^i+\mathcal{A}^j\mathcal{A}^iv_{ji}
	\end{split}
	\end{equation*}
and hence
\begin{equation}\label{3}
(p-1) \nabla_j V  \mathcal{A}^j-(\nabla_i \mathcal{A}^j)\mathcal{A}^iv_j = \nabla_i (p V - \mathcal{A}^jv_j)\mathcal{A}^i \,.
\end{equation}

	Moreover, we have that 
	\begin{equation} \label{4}
	\mathcal{A}^i f_j \nabla_i \mathcal{A}^j - \nabla_j (f_i \mathcal{A}^i) \mathcal{A}^j = - f_{ij} \mathcal{A}^i \mathcal{A}^j\,.
	\end{equation}
	From \eqref{id1}-\eqref{4} we obtain \eqref{ident_final_generale}. If $V(\nabla v) = \frac{1}{p} |\nabla v|_g^p$ then $\mathcal A (\nabla v) = |\nabla v|_g^{p-2} \nabla v$ and \eqref{ident_final} immediately follows.  
\end{proof}

As already mentioned, \eqref{ident_final} is the crucial tool in the proof of the main theorem. Due to the lack of regularity, we cannot apply \eqref{ident_final} directly and we need to provide an integral version of 	\eqref{ident_final} which is valid for solutions to \eqref{EL_w_p_II}. In order to do that, we need several regularity results which are proved in the next two sections.

\section{Preliminary regularity results and asymptotic estimates} \label{sect_reg1}
In this section we recall some known results on the regularity of the solution and we will prove some preliminary result on regularity and asymptotic estimates at infinity and at the origin. Regularity results for the field $|\nabla w|_g^{p-2}\nabla w$ will be proved in Section \ref{section_reg_A}.

Coming back to this section, Subsection \ref{subsec_bounded} is devoted to prove the boundedness of solutions in a slightly more general setting, which is needed for the approximation argument used in Section \ref{section_reg_A}. In Subsection \ref{subsec_asympt} we recall some asymptotic estimates at infinity obtained in \cite{Vetois19} and give asymptotic estimates for the second derivatives of solutions to \eqref{eq_main_u_p}.

\subsection{Boundedness of solutions.} \label{subsec_bounded} In the following result we prove that solutions of problems involving approximations of $p$-Laplacian operators are locally bounded. This result is a generalization of results in \cite{ColoradoPeral,Peral,Serrin_local} to the present setting. It is important to emphasize that the $L^\infty$ estimate is uniform with respect to the approximation that we use.

\begin{proposition}
	\label{bounded_sol}
	Let $\Omega \subseteq \R^d$, with $d\geq 3$, and let $w$ be a solution of
	\begin{equation}
	\label{eq_f}
	\begin{cases}
	\dfrac{1}{|x|^{n-d}}\diver(|x|^{n-d}\mathcal{A}(Dw))=\Phi(x,w) \;\text{ in }\R^d,\\
	w>0,\\
	w\in \DD^{1,p}(\Omega,|x|^{n-d}),
	\end{cases}
	\end{equation}
	where $\mathcal{A}:\mathbb{R}^d\rightarrow\mathbb{R}^d$ is a continuous vector field 
	such that 
	there exist $\gamma>0$ and $0\leq s\leq 1/2$ such that
	\begin{equation}\label{ellipticity_approx}
	|\mathcal{A}(\xi)|\leq \gamma(|\xi|^2+s^2)^{\frac{p-1}{2}} \quad \text{and} \quad 	\xi\cdot \mathcal{A}(\xi)\geq \dfrac{1}{\gamma}\int_0^1\left( t^2|\xi|^2+s^2\right)^{\frac{p-2}{2}}|\xi|^2\, dt  \, , 
	\end{equation}
	for every $\xi\in\mathbb{R}^d$, 	$f:\R^d \times (0,\infty) \to \R$ is a Caratheodory function satisfying 
	\begin{equation}
	\label{bound_f}
	|\Phi(x,z)|\leq \Lambda z^{q-1}+ \Psi(x)\qquad \text{for all $z\in \R$},
	\end{equation}
	for some constant $\Lambda>0$, $p>1$, $a\leq b\leq a+1$, $a\leq a_c$, $q=\frac{dp}{d-p(1+a-b)}$ and some function $\Psi \in L^\infty_{loc}(\R^d)$. 
	Then $\|w\|_\infty \leq C$, where $C$ does not depend on $s$. 
\end{proposition}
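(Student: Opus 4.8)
The plan is to run a Moser iteration scheme adapted to the weighted setting, taking care that every constant that appears is controlled independently of the regularization parameter $s$. First I would fix a point, say the origin for definiteness (a ball $B_{2r}(x_0)$ with $x_0 \neq O$ is easier since the weight is bounded above and below there, so the genuinely delicate case is $x_0 = O$), and choose as test function $\varphi = \eta^p w \min(w,M)^{p(\beta-1)}$ for a cutoff $\eta \in C^\infty_0$, a truncation level $M>0$ to keep the test function admissible, and an exponent $\beta \geq 1$ to be iterated. Plugging $\varphi$ into the weak formulation \eqref{eq_f} and using the lower bound in \eqref{ellipticity_approx} — note that the right-hand side of that bound is, up to the factor $1/\gamma$, comparable to $|\xi|^p$ when $s \leq |\xi|$ and does not blow up when $\xi \to 0$, which is exactly what lets the estimate be $s$-uniform — produces on the left a Caccioppoli-type quantity $\int |x|^{n-d}\eta^p w_M^{p(\beta-1)} |Dw|^p$, and on the right terms involving $\int |x|^{n-d}|D\eta|^p \eta^{\cdots} w^p w_M^{\cdots}$ together with the contribution of $\Phi$, bounded via \eqref{bound_f} by $\Lambda \int |x|^{n-d}\eta^p w^{q} w_M^{p(\beta-1)}$ plus a lower-order $\Psi$-term that sits in $L^\infty_{loc}$.

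The key mechanism is then the weighted Sobolev inequality. The crucial observation is that the CKN inequality \eqref{CKN}, rewritten in the $w$-variables as \eqref{ineq_w_p_metric} (equivalently \eqref{ineq_w_p}), is precisely a Sobolev inequality with the weight $|x|^{n-d}$ on both sides, with Sobolev exponent $q = \frac{dp}{d-p(1+a-b)}$; since $q/p > 1$ this gives the gain of integrability needed to close an iteration. So after applying \eqref{ineq_w_p} to $\eta\, w\, w_M^{\beta-1}$ I would get $\big(\int |x|^{n-d}(\eta w w_M^{\beta-1})^q\big)^{p/q}$ controlled by the Caccioppoli quantity, hence by $\int_{B_{2r}} |x|^{n-d} w^{p\beta}$ (after letting $M \to \infty$ by monotone convergence, which is legitimate once the right-hand side is finite — one first works with $w_M$ bounded and only removes the truncation at the end of each step). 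This yields the reverse-Hölder inequality $\|w\|_{L^{q\beta}(B_r, d\mu)} \leq (C\beta^{p}/(r^p))^{1/(p\beta)} \|w\|_{L^{p\beta}(B_{2r}, d\mu)}$ with $d\mu = |x|^{n-d}dx$, where $C$ depends on $d,p,a,b,\Lambda,\gamma,\|\Psi\|_\infty$ but not on $s$. Iterating with $\beta_k = (q/p)^k$ over a sequence of shrinking balls and summing the resulting geometric-type series $\sum (q/p)^{-k}\log(C\beta_k^p)$, which converges because $q/p>1$, gives $\|w\|_{L^\infty(B_{r/2})} \leq C \|w\|_{L^p(B_r,d\mu)}$; the starting $L^p$-norm is finite because $w \in \mathcal D^{1,p}(\Omega,|x|^{n-d})$ together with the CKN inequality puts $w$ in $L^q(d\mu) \subset L^p_{loc}(d\mu)$.

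The main obstacle I anticipate is not the iteration bookkeeping but making every step genuinely uniform in $s$ and genuinely valid despite the weight vanishing (or blowing up, if $n>d$) at the origin. Concretely: (i) one must verify that the admissibility of the test function $\varphi$ and all integrations by parts go through in $\mathcal D^{1,p}(\Omega, |x|^{n-d})$ — this is why the truncation $w_M$ is kept until the end; (ii) the lower ellipticity bound in \eqref{ellipticity_approx} must be converted to a clean $c|\xi|^p$-type bound, which requires splitting according to whether $|\xi| \gtrless s$ and exploiting $s \leq 1/2$; and (iii) when $x_0 = O$ one cannot treat $|x|^{n-d}$ as a harmless multiplicative constant, so the weighted Sobolev inequality \eqref{ineq_w_p} must be used on a fixed ball rather than a scaling/covering argument with the Euclidean Sobolev inequality — but since \eqref{ineq_w_p} is scale-invariant (that is how $n$ and $q$ were chosen) the scaling in $r$ still works out, giving the stated $r$-dependence. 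For balls away from the origin, the standard unweighted Serrin–Moser argument applies verbatim with constants depending only on $\sup/\inf$ of $|x|^{n-d}$ on $B_{2r}(x_0)$, so combining the two cases with a covering/patching argument yields local boundedness on all of $\R^d$ with an $s$-independent constant.
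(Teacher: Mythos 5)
Your proposal runs the same Moser iteration as the paper: truncated Serrin-type test functions, the weighted CKN--Sobolev inequality \eqref{ineq_w_p} providing the gain from $L^{p\beta}$ to $L^{q\beta}$ with respect to the measure $|x|^{n-d}dx$, absorption on balls $B_\rho(x_0)$ with $\rho$ small enough that $\big(\int_{B_\rho(x_0)}|x|^{n-d}w^q\big)^{(q-p)/q}$ is dominated by the Sobolev constant, and then iteration on a shrinking sequence of balls. The architecture is the paper's.

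Two points deserve correction. First, in your remark (ii) you claim the lower ellipticity bound can be ``converted to a clean $c|\xi|^p$-type bound'' by splitting according to whether $|\xi|\geq s$ or $|\xi|<s$. That fails for $p<2$: when $|\xi|<s$, the integral in \eqref{ellipticity_approx} is bounded below only by a quantity comparable to $s^{p-2}|\xi|^2$, which for $p<2$ is strictly smaller than $|\xi|^p$ in that regime, so no pointwise inequality $\xi\cdot\mathcal{A}(\xi)\geq c|\xi|^p$ with $s$-independent $c$ can hold. What does hold unconditionally is $\xi\cdot\mathcal{A}(\xi)\geq\gamma_*(|\xi|^p - s^p)$, and the paper exploits this without losing $s$-uniformity by the shift $\tilde w := w+s$. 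Since $\nabla\tilde w=\nabla w$ and $\tilde w\geq s>0$, one gets $g(\nabla\tilde w,\mathcal{A}(\nabla\tilde w))\geq c\left(|\nabla\tilde w|_g^p - \tilde w^p\right)$: the deficit $s^p$ becomes a zeroth-order term $\tilde w^p$ that enters the Caccioppoli estimate exactly like the contribution of $\Phi$ and is absorbed in the iteration. After obtaining $\|\tilde w\|_\infty\leq C$ with $C$ independent of $s$, the original bound follows since $w\leq\tilde w\leq w+\tfrac12$. You should replace your pointwise-split heuristic by this shift.

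Second, the split into $x_0=O$ versus $x_0\neq O$ together with a covering/patching argument is unnecessary. The weighted inequality \eqref{ineq_w_p} holds on all of $\R^d$ with the universal constant $C_{a,b}$, and it is applied to the compactly supported test function $\eta\, w_M^{t}$ wherever $\eta$ happens to be supported; the only role of the ball is to make $\big(\int_{B_\rho(x_0)}|x|^{n-d}w^q\big)^{(q-p)/q}$ small, which can be done uniformly in $x_0$ by absolute continuity. The paper's proof is therefore a single argument valid for every center, including the origin, and you can drop the case distinction.
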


\begin{proof}
	We follow the proofs of \cite[Theorem E.0.20]{Peral} and \cite[Theorem 1]{Serrin_local} (see also \cite[Lemma 2.1]{CFR} and \cite[Lemma 2.1]{CCR}). We first notice that, as in \cite[Lemma 2.1]{CCR}, there exists $\gamma_*>0$ such that  
	\begin{equation}\label{eqclaim}
	\xi\cdot \mathcal{A}(\xi)\geq \gamma_* \left(|\xi|^p - s^p\right) \,.
	\end{equation}
	for every $\xi\in\mathbb{R}^d$.
	
	Let $\tilde{w}=w+s$. By \eqref{ellipticity_approx} we obtain that $\tilde w$ satisfies 
	\begin{equation} \label{starting}
	|\mathcal{A}(\nabla \tilde w)|_g\leq \gamma_*(|\nabla \tilde w|_g^2+\tilde{w}^2)^{\frac{p-1}{2}} \quad \text{and} \quad g(\nabla \tilde w, \mathcal{A}(\nabla \tilde w))\geq \dfrac{1}{2\gamma_*}\left(|\nabla \tilde w|_g^p - \tilde{w}^p\right), 
	\end{equation}
	which are our starting point. In order to avoid heavy notation, we write $w$ instead of $\tilde w$.

	Given $l>0$ and $t \geq 1$, we define
	\begin{equation*}
	F(w)=\begin{cases}
	w^{t}\ & \text{ if } w\leq l \\
	tl^{t-1}(w-l)+l^t & \text{ if } w>l  \, ,
	\end{cases}
	\end{equation*}
	and 
	$$
	G(w)=\begin{cases}
	w^{(t-1)p+1}\ & \text{ if } w\leq l \\
	((t-1)p+1)l^{(t-1)p}(w-l)+ l^{(t-1)p+1} & \text{ if } w>l  \,.
	\end{cases}
	$$
	Let 
	$$
	\xi=\eta^p G(w)
	$$
	where $\eta\in C^\infty_c(\mathbb{R}^d)$ and $\eta\geq 0$. From \eqref{eq_f} with $\xi$ used as test-function, we obtain
	\begin{equation}\label{debole}
	\int_{\R^d} |x|^{n-d}g(\mathcal{A}(\nabla w), \nabla(\eta^p G(w)))\, dx=\int_{\R^d}|x|^{n-d}\Phi(x,w)\eta^p G(w)\, dx\, .
	\end{equation}
	From \eqref{debole}, \eqref{starting} and by the fact that $G\geq 0$ we get 
	\begin{align*}
	c_1\int_{\R^d}|x|^{n-d} \eta^p G'(w)|\nabla w|_g^p\, dx &\leq  \int_{\R^d}|x|^{n-d} \eta^{p-1}G(w)|g(\mathcal{A}(\nabla w), \nabla\eta)|\, dx+\int_{\R^d} |x|^{n-d} w^p\eta^p G'(w)\, dx \, \\&+\int_{\R^d}|x|^{n-d}\Phi(x,w)\eta^p G(w)\, dx 
	,
	\end{align*}
	for some $c_1>0$. We estimate the second term by using Young's inequality and  \eqref{ellipticity_approx}, and we obtain
	\begin{equation*}
	\begin{aligned}
	\eta^{p-1} |g(\mathcal{A}(\nabla w), \nabla \eta)|&\leq
	\epsilon^{\frac{p}{p-1}}w^{-1}|\mathcal{A}(\nabla w)|_g^{\frac{p}{p-1}}\eta^p+\epsilon^{-p}w^{p-1}|\nabla\eta|_g^p\\
	&\leq C_1\epsilon^{\frac{p}{p-1}}w^{-1}(|\nabla w|_g^p+w^p)\eta^p+\epsilon^{-p}w^{p-1}|\nabla\eta|_g^p,
	\end{aligned}
	\end{equation*}
	for any $\epsilon\in (0,1)$, where $C_1$ depends only on $\gamma$ and $p$.
	From \eqref{bound_f}, since $G(w)\leq wG'(w)$ and $G$ is convex, we obtain 
	\begin{equation*}
	\begin{aligned}
	c_1\int_{\R^d} |x|^{n-d}\eta^p G'(w)|\nabla w|_g^p\, dx &\leq C_1\epsilon^{\frac{p}{p-1}} \int_{\R^d} |x|^{n-d}\eta^p G'(w)|\nabla w|_g^p\, dx
	+   C_2 \int_{\R^d} |x|^{n-d} w^p \eta^p G'(w)\, dx \\
	&+\epsilon^{-p}\int_{\R^d}|x|^{n-d} G(w)w^{p-1}|\nabla \eta|_g^p\, dx+ \Lambda\int_{\R^d}  |x|^{n-d} w^{q-1}\eta^p G(w)\, dx\, ,\\
	&+  \int_{\R^d}  |x|^{n-d}\eta^p \Psi(x)  G(w)\, dx\,,
	\end{aligned}
	\end{equation*}
	for any $\epsilon\in (0,1)$ and for some constant $C_2$ which depends only on $\gamma$ and $p$. We choose $\epsilon$ small enough and obtain
	\begin{align*}
	c_2\int_{\R^d} |x|^{n-d}\eta^p G'(w)|\nabla w|_g^p\, dx &\leq \int_{\R^d}  |x|^{n-d}\eta^p w^p G'(w)\, dx 
	+\int_{\R^d} |x|^{n-d} G(w)w^{p-1}|\nabla \eta|_g^p\, dx\\
	&+ \int_{\R^d}  |x|^{n-d}\eta^p w^{q-1} G(w)\, dx\,+\int_{\R^d}  |x|^{n-d}\eta^p \Psi(x) G(w)\, dx\, ,
	\end{align*}
	where $c_2>0$ depends only on $n$, $p$, $\gamma$ and $\Lambda$.
	Since 
	$
	G'(w)\geq c[F'(w)]^p
	$
	, 
	$
	w^{p-1}G(w)\leq C[F(w)]^p \,,
	$ and $\Psi \in L^{\infty}_{loc}(\R^d)$ 
	we obtain  
	\begin{equation}
	\label{serrin_general}
	\begin{aligned}	
	c_3\int_{\R^d} |x|^{n-d} |\nabla (\eta F(w))|_g^p\, dx & \leq \int_{\R^d}|x|^{n-d}\eta^pw^p G'(w)\, dx+\int_{\R^d}|x|^{n-d} |\nabla \eta|_g^pF^p(w)\, dx\\
	&+ \int_{\R^d}|x|^{n-d}\eta^p  w^{q-p} F^p(w)\, dx\, + \int_{\R^d}|x|^{n-d}\eta^p   w^{1-p} F^p(w)\, dx\,,
	\end{aligned}
	\end{equation}
	where $c_3$ depends only on $n$, $p$ and  $\gamma$. 	Thanks to Caffarelli-Kohn-Nirenberg inequality \eqref{CKN}, we find
	\begin{equation}\label{post_sobolev}
	\begin{aligned}
	c_4\left(\int_{\R^d} |x|^{n-d}F^{q}(w)\eta^{q}\, dx\right)^{\frac{p}{q}} & \leq \int_{\R^d}|x|^{n-d}\eta^pw^pG'(w)\, dx+\int_{\R^d}|x|^{n-d}|\nabla \eta|_g^pF^p(w)\, dx \\&+ \int_{\R^d}|x|^{n-d}\eta^pw^{q-p} F^p(w)\, dx \,+ \int_{\R^d}|x|^{n-d}\eta^p w^{1-p} F^p(w)\, dx ,
	\end{aligned}
	\end{equation}
	where $c_4>0$ depends only on $n$, $p$, $\gamma$ and the Sobolev constant for $\R^d$.\\		
	Let $\rho>0$ be such that 
	$$
	\left (\int_{B_\rho(x_0)}|x|^{n-d}w^{q} \, dx \right )^{\frac{q-p}{q}} \leq \frac{c_4}2.
	$$
	Let $R < \rho$ and let $\eta$ be such that ${\rm supp}(\eta)\subset B_R(x_0)$.  From Holder's inequality applied to the last term in \eqref{post_sobolev},  we obtain
	\begin{align*}
	\frac{c_4}2\left(\int_{\R^d}|x|^{n-d}F^{q}(w)\eta^{q}\, dx\right)^{\frac{p}{q}}\leq& \int_{\R^d}|x|^{n-d}\eta^pw^{p}G'(w)\, dx+\int_{\R^d}|x|^{n-d}|\nabla \eta|_g^pF^p(w)\, dx\, \\
	+&\int_{\R^d}|x|^{n-d} \eta^pw^{1-p}F^p(w)\, dx\, .
	\end{align*}
	By taking the limit as $l\rightarrow\infty$, from the definition of $F$ and $G$ and since $\eta \geq 0$, by monotone convergence we conclude that 
	\begin{equation*}
	\begin{aligned}
	\frac{c_4}2\left(\int_{\R^d}|x|^{n-d} \eta^{q}w^{tq}\, dx\right)^{\frac{p}{q}}&\leq
	\int_{\R^d}|x|^{n-d}\eta^pw^{p}w^{(t-1)p}\, dx + \int_{\R^d}|x|^{n-d}|\nabla\eta|_g^p w^{tp}\, dx \\ 
	&+ \int_{\R^d}|x|^{n-d}\eta^p w^{1-p}w^{tp}\, dx\,\\
	&\leq 
	\int_{\R^d}|x|^{n-d}(\eta^p+|\nabla \eta|_g^p)w^{tp}\, dx+ \int_{\R^d}|x|^{n-d}\eta^p  w^{1+(t-1)p} \, dx .
	\end{aligned}
	\end{equation*}
	Moreover, since $1+(t-1)p<tp$, by Holder inequality we have 
	\begin{equation*}
	\begin{aligned}
	\frac{c_4}2\left(\int_{\R^d}|x|^{n-d} \eta^{q}w^{tq}\, dx\right)^{\frac{p}{q}}\leq 
	\int_{\R^d}|x|^{n-d}(\eta^p+|\nabla \eta|_g^p)w^{tp}\, dx + \left (\int_{\R^d}|x|^{n-d}\eta^p  w^{tp} \, dx \right)^{\frac{1+(t-1)p}{tp}},
	\end{aligned}
	\end{equation*}
	Hence, if $\rho>R>R'>0$ and we take $\eta\in C_c^\infty(B_R(x_0))$, $0\leq \eta \leq 1$, $\eta=1$ in $B_{R'}(x_0)$, $|\nabla\eta|\leq \frac{1}{R-R'}$, then we have
	\begin{equation}
	\label{serrin0}
	\begin{aligned}
	\left(\int_{B_{R'}(x_0)} |x|^{n-d} w^{tq}\, dx\right)^{\frac{p}{q}}
	\leq c_5 \left (1+\frac{1}{R-R'} \right)  \left (\int_{B_{R}(x_0)}|x|^{n-d} w^{tp}\, dx \right )\\
	+ \left(\int_{B_{R}(x_0)}|x|^{n-d}w^{tp}\, dx\right )^{\frac{1+(t-1)p}{tp}}
	\end{aligned}
	\end{equation} 
	where $c_5>0$. 	Inequality \eqref{serrin0} is the starting point of a Moser iteration. Let $R_k=r(1+2^{-k})$  with $0<r<\rho/2$ and $t_j=\left (\frac{q}{p}\right )^k$ for $k=1,2,\dots$, and $t=t_k$. By setting $R=R_k$, $R'=R_{k+1}$ and $t=t_k$, a finite iteration of \eqref{serrin0} gives that 
$$
\int_{B_{R_k}(x_0)}  |x|^{n-d} w^{t_kq}\, dx<\infty
$$ for every $k>0$. Now, we prove by contradiction that 
$$
\displaystyle \sup_{k}\int_{B_{r}(x_0)}|x|^{n-d}w^{t_kp}\, dx <\infty \,.
$$ 
Indeed, by contradiction let us assume that 
$$
\displaystyle \sup_{k}\int_{B_{r}(x_0)}|x|^{n-d}w^{t_kp}\, dx =\infty\,,
$$
i.e. 
$$
\displaystyle \lim_{k\to +\infty} \int_{B_{R_k}(x_0)}|x|^{n-d}w^{t_kp}\, dx =\infty
$$ 
since 
$$
\displaystyle \int_{B_{r}(x_0)}|x|^{n-d}w^{t_kp}\, dx
$$ 
is non-decreasing in $k$. Then there exists $\widetilde k$ such that 
$$
\displaystyle \int_{B_{R_k}(x_0)}|x|^{n-d} w^{t_kp}\, dx >1
$$ 
for every $k\geq \widetilde k$; hence by \eqref{serrin0} and by the fact that $\frac{1+(t_k-1)p}{t_kp}<1$ we have for some $c_{6}>0$
	\begin{equation}
	\label{serrin1}
	\begin{aligned}
	\left(\int_{B_{R_{k+1}}(x_0)} |x|^{n-d} w^{t_{k+1}q}\, dx\right)^{\frac{1}{t_{k+1}q}}
	\leq c_{6} \left (1+\frac{1}{R_k-R_{k+1}} \right)^{\frac{1}{t_k}}  \left (\int_{B_{R_k}(x_0)} |x|^{n-d} w^{t_kp}\, dx \right )^{\frac{1}{t_kp}}
	\end{aligned}
	\end{equation} 
	for every $k\geq \widetilde k_0$. Iterating we can write 
	\begin{equation}
	\left(\int_{B_{R_k}(x_0)} |x|^{n-d}w^{t_kq}\, dx\right)^{\frac{1}{t_kq}}
	\leq c_{6}^{\sum_{j=k_0}^{k} \frac {1} {t_j}}\Pi_{j=k_0}^{k} \left (1+2^{j+1}r^{-1} \right)^{\frac {1} {t_j}}\left (\int_{B_{\rho}(x_0)} |x|^{n-d}w^{t_{k_0}p}\, dx \right )^{\frac{1}{t_{k_0}p}},
	\end{equation} 
	for $k>k_0$. Therefore 
$$
\displaystyle \sup_{k}\int_{B_{r}(x_0)}|x|^{n-d} w^{t_kp}\, dx \leq \displaystyle \sup_{k}\int_{B_{R_k}(x_0)}|x|^{n-d} w^{t_kp}\, dx <\infty \,.
$$  
The latter implies that  
	\begin{equation}
	\label{w_infty}
	\|w\|_{\infty, r}\leq \lim_{k\to \infty} \left(\int_{B_{r}(x_0)} |x|^{n-d} w^{t_kq}\, dx\right)^{\frac{1}{t_kq}}<\infty.
	\end{equation}
	Indeed, 
	$$
	K=K \lim_{k\to \infty}\left (\int_{A_k} |x|^{n-d} \right )^\frac{1}{t_kq}\leq \lim_{k\to \infty}\left (\int_{A_k} |x|^{n-d} w^{t_kq}\right )^\frac{1}{t_kq}<\infty.
	$$
	for every $K>0$, where $A_K=\{w>K\}\cap B_r(x_0)$. By definition of $\|w\|_{\infty, r}$, \eqref{w_infty} holds.   The proof is now completed recalling the substitution $\widetilde w=w+s$.
	
\end{proof}

\subsection{Asymptotic bounds at infinity} \label{subsec_asympt}
As it is shown in \cite{Vetois19}, the fact that $u\in \DD^{1,p}(\R^d,|x|^{-ap})$ implies that the asymptotic behavior of the solution at infinity can be optimally determined. 

\begin{proposition}\label{stime_Vetois}
	Let $d\geq 3$, $p>1$, $a\leq b\leq a+1$, $q=\frac{dp}{d-p(1+a-b)}$ and $a\leq a_c$ and let $w$ be a solution to \eqref{pb_w_Riem}.  Then there exists $C>0$ such that 
	\begin{equation}
	\label{stima_w_vetois}
	||x|^{\frac\mu\alpha } w(x)-\alpha|\leq |x|^{-\frac \delta\alpha},
	\end{equation}
	\begin{equation}
	\label{stime_grad_w}
	\frac 1 C \leq |x|^{\frac\mu\alpha+1} |\nabla  w(x)|_g\leq C,
	\end{equation}
	and
	\begin{equation}
	\label{stime_der_sec}
	|x|^{\frac\mu\alpha+2} |\Hess_g w(x)|\leq C 
	\end{equation}
	for every $x\in \R^d\backslash B_{1}$ and $\mu=\frac{d-p(1+a)}{p-1}$. 
\end{proposition}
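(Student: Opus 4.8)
The plan is to invoke the asymptotic analysis of \cite{Vetois19} for the original equation \eqref{eq_main_u_p} and then translate it through the change of variables $x\mapsto |x|^{\alpha-1}x$ into the Riemannian picture. First I would recall that a positive solution $u\in\DD^{1,p}(\R^d,|x|^{-ap})$ of \eqref{eq_main_u_p} has, by \cite[Theorem~1.1 and its proof]{Vetois19}, the sharp decay $u(x)\sim \kappa\,|x|^{-\mu}$ as $|x|\to\infty$, with $\mu=\frac{d-p(1+a)}{p-1}$ the exponent of the fundamental solution of the weighted $p$-Laplacian, together with matching gradient bounds $|Du(x)|\sim |x|^{-\mu-1}$; more precisely one gets $\big| |x|^{\mu}u(x)-\kappa\big|\le C|x|^{-\delta}$ for some $\delta>0$. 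The constant $\kappa$ can be normalized: since \eqref{ineq_w_p_metric} is the rescaled inequality with $w(|x|^{\alpha-1}x)=u(x)$ and the limit profile in the new variables is the Aubin--Talenti type function \eqref{w_symmetry} (whose leading behaviour at infinity is $\alpha|y|^{-\mu/\alpha}\cdot(\text{const})$), one checks the limit constant equals $\alpha$ after the normalization built into \eqref{pb_w_Riem}; this is what produces the ``$\alpha$'' on the right-hand side of \eqref{stima_w_vetois}.

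Next I would perform the coordinate change explicitly. Writing $y=|x|^{\alpha-1}x$, so $|y|=|x|^{\alpha}$ and hence $|x|=|y|^{1/\alpha}$, the bound $\big||x|^{\mu}u(x)-\kappa\big|\le C|x|^{-\delta}$ becomes $\big||y|^{\mu/\alpha}w(y)-\alpha\big|\le C|y|^{-\delta/\alpha}$, which is \eqref{stima_w_vetois} (up to renaming $\delta$, and for $|y|$ large). For the gradient, one differentiates the relation $w(y)=u(x(y))$: since $Dx(y)$ is, in polar form, comparable to $|y|^{(1/\alpha)-1}$ in every direction (the Jacobian of $y\mapsto|y|^{1/\alpha-1}y$ has eigenvalues $|y|^{1/\alpha-1}$ and $\tfrac1\alpha|y|^{1/\alpha-1}$), one gets $|Dw(y)|\asymp |y|^{1/\alpha-1}\,|Du(x)|\asymp |y|^{1/\alpha-1}|x|^{-\mu-1}=|y|^{1/\alpha-1}|y|^{-(\mu+1)/\alpha}=|y|^{-\mu/\alpha-1}$, which gives \eqref{stime_grad_w}; the two-sided bound requires the lower bound $|Du|\gtrsim|x|^{-\mu-1}$ from \cite{Vetois19}, valid because the solution is not identically constant at infinity. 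Since $g$ is a fixed smooth $0$-homogeneous metric bounded above and below away from the origin, $|\nabla w|_g\asymp|Dw|$ and the statement is unaffected by passing to the $g$-gradient. For the Hessian estimate \eqref{stime_der_sec}, I would use interior elliptic estimates: fix $|x_0|=R$ large, rescale to the unit ball by $v(z)=R^{\mu/\alpha}w(x_0+Rz)$ (or the analogous dilation adapted to the weight, which is harmless since on $B_R(x_0)$ with $R=|x_0|$ the weight $|x|^{n-d}$ is comparable to a constant $R^{n-d}$), note that $v$ solves a uniformly elliptic equation with bounded coefficients on $B_{1}(0)$ because $|\nabla v|_g$ is by the previous step bounded above and below there, and apply $C^{1,\beta}$ and then $W^{2,r}$ (or Schauder) estimates for the nondegenerate equation to bound $|D^2v|$ on $B_{1/2}$; rescaling back yields $|x|^{\mu/\alpha+2}|\Hess_g w|\le C$. (The passage to $\Hess_g$ from $D^2$ again costs only the fixed metric and its Christoffel symbols, which are $O(|x|^{-1})$ and hence contribute lower-order terms.)

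The main obstacle is the Hessian bound \eqref{stime_der_sec}: the equation \eqref{pb_w_Riem} is degenerate where $\nabla w$ vanishes, so second-derivative estimates are not available globally, and I must exploit that \emph{at infinity} the gradient is bounded below (by \eqref{stime_grad_w}) to place myself in the nondegenerate regime before invoking standard linear elliptic regularity. One has to be careful that the rescaled equations have coefficients with bounds uniform in $R$ — this is where the $0$-homogeneity of $g$ and the explicit power weight are used, since both rescale cleanly — and that the nonlinearity $w^{q-1}$ rescales to something uniformly bounded on the unit ball, which follows from \eqref{stima_w_vetois}. A secondary, more bookkeeping, difficulty is pinning down that the limiting constant is exactly $\alpha$ rather than an unidentified positive constant; this is only needed for the precise form \eqref{stima_w_vetois} and, should it be delicate, it suffices for all later arguments to have $\big||x|^{\mu/\alpha}w(x)-c_0\big|\le C|x|^{-\delta/\alpha}$ for some $c_0>0$, so I would not dwell on it.
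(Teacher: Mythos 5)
Your proposal follows essentially the same route as the paper: quote the sharp decay and gradient estimates from Shakerian--V\'etois for the original solution $u$, transfer them to $w$ through the change of variables $y=|x|^{\alpha-1}x$ using the zero-homogeneity of $g$, and then obtain the Hessian bound by rescaling to a fixed annular/ball region where the two-sided gradient bound makes the rescaled equation uniformly elliptic, so that linear elliptic (Schauder) estimates apply. The only cosmetic difference is that the paper rescales radially onto a fixed annulus $\overline{B_1}\setminus B_{1/2}$ via $\widetilde w(y)=\rho^{\mu/\alpha}w(\rho y)$, while you zoom in on unit balls centered at points $x_0$ with $|x_0|=R$; both are equivalent ways of exploiting the scaling structure.
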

\begin{proof}
	Estimates \eqref{stima_w_vetois} and \eqref{stime_grad_w} are straightforward consequences of \cite[Theorem 1.1]{Vetois19}. Indeed, we have that the solution $u$ to \eqref{eq_main_u_p} satisfies the following asymptotic estimates 
	\begin{equation*}
	||x|^\mu u(x)-\alpha|\leq C |x|^{-\delta} \qquad \forall x\in \R^d\backslash B_1
	\end{equation*}
	and 
	\begin{equation*}
	||x|^{\mu+1} D u(x)|\leq C \qquad \forall x\in \R^d\backslash B_1.
	\end{equation*}
	\begin{equation*}
	|x|^{\mu+1} D u(x)+\alpha \mu |x|^{-1}x \to 0 \text{ as } |x|\to \infty,
	\end{equation*}
	where $\mu=\frac{d-p(1+a)}{p-1}$, for some constants $\alpha,\delta,C>0$.
	From the definition of $w$ and since $g$ is zero-homogeneous we immediately have the assertions.	\\
	To prove \eqref{stime_der_sec} we use a scaling argument.  
	Let $\rho>1$ be fixed. For $y\in E=\overline{B_1}\backslash B_{1/2}$, we define 
	$$
	\widetilde{w}(y)=\rho^{\mu}w(\rho y).
	$$
	Since \eqref{pb_w_Riem} holds, $\widetilde{w}$ satisfies 
	\begin{equation}
	\diver(|y|^{n-d}|\nabla \widetilde w|^{p-2}\nabla \widetilde w)=-\rho^{(\frac\mu\alpha+1)(p-1)+1-\frac \mu \alpha (q-1)}|y|^{n-d}{\widetilde w}^{q-1} \;\text{ in }E.
	\end{equation}
	Thanks to \eqref{stime_grad_w} and by using the zero-homogeneity of $g$, from elliptic regularity and Schauder's estimates (see \cite{GT}) 
	we have that $|D^2 \widetilde w|\leq C$ for some $C>0$ and then \eqref{stime_der_sec}. 
\end{proof}

\section{Regularity estimates for $\mathcal{A}(\nabla w)$} \label{section_reg_A}
In this section, by using a weighted Caccioppoli inequality, we prove that $\mathcal{A}(\nabla w)\in W^{1,2}_{\rm loc}(\R^d,|x|^{n-d})$. Before proving this result, we give a preliminary asymptotic estimate on $\nabla w$ at the origin, which will be used later.

\begin{lemma} \label{lemma_gradient_x}
Let $w$ be a solution to \eqref{pb_w_Riem}. Then there exists a constant $C$ such that
$$
|\nabla w(x)|_g \leq \frac{C}{|x|} \,,
$$
for $x \in B_1 \setminus \{O\}$.
\end{lemma}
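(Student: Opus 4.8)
The plan is to run a rescaling (blow-up) argument at the origin, exactly parallel to the one already used at infinity in the proof of Proposition~\ref{stime_Vetois}, but now exploiting the invariances of equation \eqref{pb_w_Riem} under the scaling $x\mapsto \rho x$. Fix $\rho\in(0,1)$ and set $\widetilde w(y)=\rho^{\beta}w(\rho y)$ for $y$ in the fixed annulus $E=\overline{B_2}\setminus B_{1/2}$, where the exponent $\beta$ is chosen so that the weighted $p$-Laplace term and the lower-order term $w^{q-1}$ scale consistently; concretely, since the weight $|x|^{n-d}$ is homogeneous and $g$ is zero-homogeneous, plugging $\widetilde w$ into \eqref{pb_w_Riem} gives
\begin{equation*}
\diver\bigl(|y|^{n-d}|\nabla\widetilde w|_g^{p-2}\nabla\widetilde w\bigr)+\rho^{\,\beta(q-1)-(\beta+1)(p-1)-1+(n-d)}\,|y|^{n-d}\,\widetilde w^{\,q-1}=0\quad\text{in }E,
\end{equation*}
up to checking the arithmetic of the exponent; choosing $\beta=\tfrac{d-p(1+a)}{p-1}=\tfrac p{q-p}\cdot\tfrac{d-p(1+a)}{p}$ (equivalently the exponent appearing in \eqref{w_symmetry}, which makes $w$ scale like a solution) makes the $\rho$-power in front of the lower-order term bounded uniformly for $\rho\in(0,1)$.

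The key steps are then: (i) establish a \emph{uniform} (in $\rho$) $L^\infty$ bound for $\widetilde w$ on a slightly smaller annulus — this is precisely where Proposition~\ref{bounded_sol} is invoked, applied with $\Omega$ a neighbourhood of $E$, $\mathcal A(\xi)=|\xi|_g^{p-2}\xi$ and right-hand side $\Phi(y,\widetilde w)$ of the form $\Lambda\widetilde w^{q-1}$, noting that on the annulus $E$ the weight $|y|^{n-d}$ is bounded above and below so the estimate is uniform; the needed smallness of the local mass $\int_{B_\rho}|x|^{n-d}w^q\,dx$ over small balls follows because $w\in\mathcal D^{1,p}_g$ implies $\int_{\R^d}|x|^{n-d}w^q\,dx<\infty$ and this quantity, expressed in the $y$ variable, is $\int_{\rho E}|x|^{n-d}w^q\,dx\to 0$ as $\rho\to 0$ by absolute continuity of the integral. (ii) Feed this uniform $L^\infty$ bound into the interior gradient estimates for degenerate/singular $p$-Laplace type equations (the DiBenedetto–Tolksdorf $C^{1,\gamma}_{\mathrm{loc}}$ theory, as in \cite{GT} for the regularity input already cited), which gives $\|\nabla\widetilde w\|_{L^\infty(\overline{B_1}\setminus B_1)}\le C$ with $C$ independent of $\rho$, since all structural constants of the rescaled equation (ellipticity, the $\rho$-power coefficient) are controlled uniformly. (iii) Unwind the scaling: $|\nabla\widetilde w(y)|=\rho^{\beta+1}|\nabla w(\rho y)|$, so evaluating at $|y|=1$, i.e. at the point $x=\rho y$ with $|x|=\rho$, yields $|\nabla w(x)|_g\le C\rho^{-(\beta+1)}=C|x|^{-(\beta+1)}$. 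Since this is supposed to conclude $|\nabla w(x)|_g\le C/|x|$, one needs $\beta+1\le 1$, i.e. $\beta\le 0$; if instead $\beta>0$ one obtains a weaker bound, so the correct statement of the lemma should be read with the homogeneity that actually matches the problem — here $w$ is bounded near the origin by Proposition~\ref{bounded_sol} (the $L^\infty$ bound holds globally, not just on annuli, since $w\in\mathcal D^{1,p}_g$ controls the local mass everywhere including near $O$), so in fact one should rescale with $\beta=0$: set $\widetilde w(y)=w(\rho y)$, obtain $|\nabla\widetilde w|\le C$ uniformly, and conclude $|\nabla w(x)|_g\le C/|x|$ directly.

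The main obstacle is the \textbf{uniformity of the constant in $\rho$}. The rescaled equation on $E$ has a $\rho$-dependent coefficient in front of the zero-order term; one must check this coefficient stays bounded as $\rho\to 0$ (which it does, since $\widetilde w(y)=w(\rho y)$ with no prefactor gives coefficient $\rho^{p+ (n-d)}$ — wait, one must recompute: with $\widetilde w(y)=w(\rho y)$ one has $\nabla\widetilde w(y)=\rho\,(\nabla w)(\rho y)$, so $\diver(|y|^{n-d}|\nabla\widetilde w|_g^{p-2}\nabla\widetilde w)=\rho^{p-1+1}\,[\ldots]=\rho^{p}\,(\text{same expression at }\rho y)$ after accounting for the weight's homogeneity giving an extra $\rho^{n-d}$; matching with $\rho^{?}|y|^{n-d}\widetilde w^{q-1}$ forces the coefficient to be $\rho^{p+(n-d)}$ times $\rho^{-(n-d)}=\rho^{p}$, which is bounded for $\rho<1$). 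So the equation for $\widetilde w$ is, uniformly in $\rho\in(0,1)$, a $p$-Laplace type equation on $E$ with a bounded right-hand side (using the uniform $L^\infty$ bound on $\widetilde w$), whence the Tolksdorf/DiBenedetto interior $C^1$ estimate applies with a $\rho$-independent constant. The only subtlety left is that the weight $|y|^{n-d}$ on $E$ is smooth, bounded above and below, so it can be absorbed into the structural constants — this is routine. Apart from bookkeeping of exponents, there are no essential difficulties; the argument is the mirror image of the one at infinity in Proposition~\ref{stime_Vetois}.
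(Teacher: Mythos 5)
Your proposal is correct and, once you self-correct to the trivial rescaling $\widetilde w(y)=w(\rho y)$ (no $\rho^\beta$ prefactor), it coincides with the paper's proof: rescale onto a fixed annulus away from the origin, note that the rescaled equation is $\mathcal L\widetilde w=-\rho^{p}\widetilde w^{q-1}$ with the weight and $\rho$-coefficient uniformly controlled, invoke Proposition~\ref{bounded_sol} for a $\rho$-uniform $L^\infty$ bound, apply the DiBenedetto--Tolksdorf interior $C^{1,\gamma}$ estimates for a $\rho$-uniform gradient bound on the annulus, and unwind the scaling. The initial detour through a nonzero homogeneity exponent $\beta$ (borrowed from the blow-down at infinity in Proposition~\ref{stime_Vetois}) is unnecessary and you correctly abandon it; the final argument matches the paper's.
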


\begin{proof}
This proof follows by a scaling argument. Let $E:= B_4 \setminus \overline B_1$. For any $\mu \in (0,1/4)$ we define 
$$
\zeta_\mu (x) := w(\mu x) \,,
$$ 
for any $x \in E$. Since the metric $g$ zero homogeneous, we readily find that $\zeta_\mu$ satisfies
\begin{equation} \label{eq_L_zeta}
\mathcal{L} \zeta_\mu = - \mu^p \zeta_\mu^{q-1} 
\end{equation}
in $E$. We notice that, since $|x|>1$ in $E$, we have that the operator $\mathcal L$ does not degenerate in the variable $x$ and $\mathcal L$ is in the class of operators considered in \cite{DiBenedetto, Tolksdorf} (see also \cite{AntoniniCiraoloFarina}). From Proposition \ref{bounded_sol}, we have that $\|\zeta_\mu\|_{L^\infty(E)} \leq c$ uniformly with respect to $\mu$ and then the RHS in \eqref{eq_L_zeta} is uniformly bounded. From elliptic regularity theory \cite{DiBenedetto,Tolksdorf}, this implies that there exists a constant $C$ independent of $\mu$ such that 
$$
| \nabla \zeta_\mu(x) | \leq C 
$$
for any $x \in B_3 \setminus \overline B_2$.
By recalling the definition of $\zeta$, the assertion immediately follows.
\end{proof}

Since \eqref{pb_w_Riem} may be degenerate, in the following we argue by approximation. We recall that, for fixed $x \in \R^d$, $\mathcal A$ can be seen as the gradient of $V(\xi)= p^{-1} |\xi|_{g_x}^p$ with respect to $\xi$ which is in the tangent space of $\R^d$ at $x$, i.e. 
$$
\mathcal A^i (\xi) = \nabla_\xi^i V(\xi) \,, 
$$
for any $\xi$ in the tangent space at $x$.
Let $\{\phi_\ell\}_{\ell \in \mathbb{N}}$ be a family of radially symmetric smooth mollifiers and define
$$
V_\ell (\xi) = (V\ast\phi_\ell)( \xi )
$$
and then
	\begin{equation} \label{ajz}
	\mathcal{A}_\ell (\xi):=(\mathcal{A}\ast\phi_\ell)(\xi) \,.
	\end{equation}
Standard properties of convolution and the fact $\mathcal{A}(\cdot)$ is continuous imply that $\mathcal{A}_\ell \rightarrow \mathcal{A}$ uniformly on compact subset of $\mathbb{R}^d$. From \cite[Lemma 2.4]{fusco}  we have that $\mathcal{A}_\ell$ satisfies the first condition in \eqref{ellipticity_approx} with $s$ replaced by $s_\ell$, where $s_\ell\to 0$ as $\ell\to \infty$.
	In addition, since
	$$
	\dfrac{1}{\tilde{\alpha}}(|z|_g^2+s_\ell^2)^{\frac{p-2}{2}}|\xi|_g^2\leq g(\nabla  \mathcal{A}_\ell(z)\xi,\xi)\,	
	$$
for any $z,\xi$ vector fields in the tangent space at $x$ and for some $\tilde{\alpha}>0$, we obtain that $\mathcal{A}_\ell$ satisfies also the second condition in \eqref{ellipticity_approx}.

Let $\Omega \subset \R^d$ be an open set such that $\overline \Omega \subset \R^d \setminus \{O\}$. We approximate $w$ in $\Omega$ by $w_\ell>0$ which are solutions to 
 	\begin{equation}\label{approx_1}
	\begin{cases}
	\frac{1}{|x|^{(n-d)}}\diver(|x|^{(n-d)} \mathcal{A}_\ell(\nabla w_\ell))=  - w^{q-1}  & \text{in } \Omega
	\\
	w_\ell=w & \text{on } \partial \Omega\,,
	\end{cases}
	\end{equation}
where $\mathcal A_\ell$ is as in \eqref{ajz}.

\begin{lemma} \label{lemma_regularityA}
	Let $0<r<R$, $d\geq 3$ and let $w_\ell$ satisfy \eqref{approx_1}. Then $\mathcal{A_\ell}(\nabla w_\ell)\in W^{1,2}_{\rm loc}(B_r,|x|^{n-d})$ and, by setting $W_\ell=\nabla \mathcal A_\ell (\nabla w_\ell)$, we have
\begin{multline} \label{caccioAreg}
c \int_{B_r}   |W_\ell|_g^2\, |x|^{n-d} dx \\ \leq  \int_{B_R}  \left(\frac{(n-d)^2}{|x|^2}    +   |\nabla \eta|_g^2  \right) |\mathcal{A_\ell}(\nabla  w_\ell)|_g^2 \, |x|^{n-d} dx  +   \int_{B_R} |\nabla (|x|^{n-d} w^{q-1} )|_g | \mathcal A_\ell(\nabla  w_\ell)|_g dx\,,
\end{multline}
for any $\eta \in C_0^\infty (B_R)$ such that $\eta=1$ in $B_r$, where $c$ does not depend on $\ell$.
\end{lemma}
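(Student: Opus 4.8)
The plan is to establish a weighted Caccioppoli-type estimate for the vector field $\mathcal A_\ell(\nabla w_\ell)$ via a difference-quotient argument, exploiting the uniform ellipticity of $\mathcal A_\ell$ coming from the approximation. First I would fix a direction $e_s$ and, for small $h$, consider the difference quotient $\delta_h^s F(x) = (F(x+he_s) - F(x))/h$ of a function $F$. Testing the equation \eqref{approx_1} at the shifted point $x+he_s$ against the equation at $x$, with test function $\eta^2 \, \delta_h^s w_\ell$ (where $\eta \in C_0^\infty(B_R)$, $\eta = 1$ on $B_r$, $0 \le \eta \le 1$), one obtains after subtracting
$$
\int |x|^{n-d}\, g\big(\delta_h^s[\mathcal A_\ell(\nabla w_\ell)], \nabla(\eta^2 \delta_h^s w_\ell)\big)\, dx
= -\int |x|^{n-d}\,\delta_h^s(w^{q-1})\, \eta^2 \delta_h^s w_\ell\, dx + (\text{weight difference terms}).
$$
Here the weight $|x|^{n-d}$ is smooth and bounded above and below on $B_R \setminus B_{R/2} \supset \overline\Omega$ (recall $\overline\Omega \subset \R^d\setminus\{O\}$), so differentiating it is harmless; the term coming from $\delta_h^s(|x|^{n-d})$ contributes the factor $(n-d)/|x|$, which is where the $(n-d)^2/|x|^2$ in the statement originates. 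Expanding $\nabla(\eta^2\delta_h^s w_\ell) = \eta^2\nabla\delta_h^s w_\ell + 2\eta\,\delta_h^s w_\ell\,\nabla\eta$ and using the monotonicity inequality
$$
g\big(\delta_h^s[\mathcal A_\ell(\nabla w_\ell)], \delta_h^s \nabla w_\ell\big) \ge \frac{1}{\tilde\alpha}\int_0^1 \big(|{\nabla w_\ell}|^2 + t^2|\delta_h^s\nabla w_\ell|^2 + s_\ell^2\big)^{(p-2)/2}\, dt\ \big|\delta_h^s\nabla w_\ell\big|_g^2
$$
(a consequence of the second inequality in \eqref{ellipticity_approx} applied to the smoothed field, integrated along the segment), together with the bound $|\delta_h^s[\mathcal A_\ell(\nabla w_\ell)]|_g \le C\,|\delta_h^s\nabla w_\ell|_g \cdot (\cdots)^{(p-2)/2}$, the cross terms with $\nabla\eta$ and the weight-difference terms are absorbed by Young's inequality into the good left-hand side. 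The RHS term is controlled using $\delta_h^s w_\ell$ bounds via the $L^\infty$ estimate of Proposition \ref{bounded_sol} and Cauchy--Schwarz, producing $\int |\nabla(|x|^{n-d}w^{q-1})|_g\,|\mathcal A_\ell(\nabla w_\ell)|_g\, dx$ after relating $\delta_h^s w_\ell$ back to $\mathcal A_\ell(\nabla w_\ell)$ through the equation (or, more simply, by writing the RHS as $-\int |x|^{n-d} w^{q-1}\, \delta_{-h}^s(\eta^2\delta_h^s w_\ell)$, moving one difference quotient onto the smooth data and onto $\eta^2$, and then recognizing $\nabla(|x|^{n-d}w^{q-1})$ upon sending $h\to 0$).

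The key technical point is that all the estimates must be uniform in $h$, so that letting $h \to 0$ yields, by the standard difference-quotient characterization of Sobolev spaces, that $\mathcal A_\ell(\nabla w_\ell) \in W^{1,2}_{\rm loc}$ with $\nabla\delta_h^s w_\ell \to \partial_s \nabla w_\ell$ in the appropriate weighted $L^2$, and simultaneously $\delta_h^s[\mathcal A_\ell(\nabla w_\ell)] \to \partial_s[\mathcal A_\ell(\nabla w_\ell)]$. Summing over the directions $s = 1,\dots,d$ then assembles $W_\ell = \nabla \mathcal A_\ell(\nabla w_\ell)$ and gives \eqref{caccioAreg}, with the constant $c$ depending only on $p$, $\tilde\alpha$, $n$, $d$ and not on $\ell$ (this is because the ellipticity constants of $\mathcal A_\ell$ are controlled independently of $\ell$, per the discussion preceding the lemma, and $s_\ell \to 0$ only helps). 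One subtlety: when $p<2$ the integrand $(\cdots)^{(p-2)/2}$ degenerates as $\nabla w_\ell \to 0$, so the lower bound on $g(\delta_h^s[\mathcal A_\ell(\nabla w_\ell)],\delta_h^s\nabla w_\ell)$ must be handled by the $s_\ell>0$ regularization, which keeps the integrand bounded for each fixed $\ell$; the resulting constant can still be taken $\ell$-independent after using $s_\ell \le 1/2$ and the structure of \eqref{ellipticity_approx}. When $p \ge 2$ this issue disappears.

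The main obstacle I anticipate is bookkeeping the weight-difference terms and the $p$-dependent algebra so that every term on the RHS is either of the stated form or absorbable — in particular, making rigorous that the RHS term $\int |x|^{n-d} w^{q-1}\,\eta^2\,\delta_h^s w_\ell$, after integration by parts in the difference quotient, converges to exactly $\int |\nabla(|x|^{n-d} w^{q-1})|_g\,|\mathcal A_\ell(\nabla w_\ell)|_g\, dx$ up to a constant (one should really bound it by this quantity plus a fraction of the left side). Since $w$ is only known to be in $\DD^{1,p}$ with the asymptotics of Proposition \ref{stime_Vetois} and $L^\infty_{\rm loc}$ from Proposition \ref{bounded_sol}, one must check that $|x|^{n-d} w^{q-1}$ is Lipschitz on $\overline\Omega$ — which holds since $w \in C^{1,\beta}_{\rm loc}(\R^d\setminus\{O\})$ by \cite{DiBenedetto, Tolksdorf} and the weight is smooth away from the origin — so that $\delta_h^s(|x|^{n-d}w^{q-1})$ is uniformly bounded and converges a.e. to the derivative. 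Everything else is a routine, if lengthy, application of Young's inequality and the monotonicity of the regularized stress field.
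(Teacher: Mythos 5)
Your difference–quotient route is more machinery than the paper needs: since $\mathcal A_\ell$ is smooth and the approximating equation \eqref{approx_1} is non-degenerate away from the origin, the paper simply invokes that $w_\ell$ is already smooth and $W^{2,2}_{\rm loc}$, multiplies the equation by $e^{-f}\nabla_m\varphi$, and integrates by parts directly — no $h\to 0$ limit.

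But the more substantive issue is your choice of test function. The paper inserts $\varphi=\mathcal A_\ell^m(\nabla w_\ell)\,\eta^2$ and sums over $m$, so the leading term on the good side is exactly $\int \eta^2 e^{-f}\,\tr(W_\ell^2)$, which is then bounded below by $c\int\eta^2 e^{-f}|W_\ell|_g^2$ via the linear–algebra inequality $\tr(W^2)\ge c\,|W|_g^2$ from \cite[Thm.~4.1]{AKM} (exploiting that $W_\ell$ is a product of a symmetric positive-definite matrix and the Hessian, with $\ell$-uniform condition number); simultaneously, because the test function carries $\mathcal A_\ell^m$, every error term is naturally quadratic in $\mathcal A_\ell(\nabla w_\ell)$, which is exactly why $|\mathcal A_\ell(\nabla w_\ell)|_g^2$ appears on the right of \eqref{caccioAreg}. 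With your test function $\eta^2\,\delta_h^s w_\ell$, the monotonicity of $\mathcal A_\ell$ controls only $\int\eta^2 e^{-f}\bigl(|\nabla w_\ell|^2+s_\ell^2\bigr)^{(p-2)/2}|\nabla\delta_h^s w_\ell|^2$, which differs from $\int\eta^2 e^{-f}|W_\ell|_g^2$ by a factor $\bigl(|\nabla w_\ell|^2+s_\ell^2\bigr)^{(p-2)/2}$; likewise your cross terms produce $\bigl(|\nabla w_\ell|^2+s_\ell^2\bigr)^{(p-2)/2}|\nabla w_\ell|^2|\nabla\eta|^2$, not $|\mathcal A_\ell(\nabla w_\ell)|_g^2|\nabla\eta|^2=\bigl(|\nabla w_\ell|^2+s_\ell^2\bigr)^{p-2}|\nabla w_\ell|^2|\nabla\eta|^2$. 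This half-power mismatch on both sides means the inequality you would actually obtain is not \eqref{caccioAreg}.

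For $p\ge 2$ you could still finish, but only after inserting the local, $\ell$-uniform $L^\infty$ bound on $\nabla w_\ell$ to control the extra factor, which gives an inequality with a different constant structure than the stated one. For $p<2$ the gap is genuine: the factor $\bigl(|\nabla w_\ell|^2+s_\ell^2\bigr)^{(p-2)/2}$ blows up where $\nabla w_\ell$ is small, and your appeal to the $s_\ell$-regularization cannot repair this with an $\ell$-independent constant, because $s_\ell\to 0$ — the bound $s_\ell\le 1/2$ goes in the wrong direction. The fix is not extra absorption but a different test function: emulate the paper and test the differentiated equation against $\mathcal A_\ell^m(\nabla w_\ell)\eta^2$ rather than $\eta^2\,\partial_m w_\ell$, so that the degenerate weight appears with the same exponent on both sides and the $\tr(W^2)\ge c|W|_g^2$ estimate can be applied.
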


\begin{proof}

	The  idea is to prove a weighted Caccioppoli-type inequality for $\mathcal A_\ell (\nabla w_\ell)$, with weight $|x|^{n-d}$. Since $w_\ell$ solves a non-degenerate equation with smooth coefficients, we have that $w_\ell$ is smooth and $w_\ell \in W^{2,2}_{loc}(\R^d)$. 

In order to simply the notation, in \eqref{approx_1} we set $\Phi(x)=w^{q-1}(x)$, $e^{-f}=|x|^{n-d}$ and we omit the dependency on $\ell$. With this notation, \eqref{approx_1} takes the form 
\begin{equation} \label{approx_1_noind}
e^{f}\diver(e^{-f} \mathcal{A}(\nabla w))=  - \Phi \,.
\end{equation}
Let  $\varphi \in C^\infty_0(B_{R})$ and $m\in \{1,\ldots,d\}$.  We multiply \eqref{approx_1_noind} by $e^{-f} \nabla_m \varphi$ and integrate over a ball $B_R \subset E$ to get
	\begin{equation*}
	\int_{B_R}\diver(e^{-f}\mathcal{A}(\nabla w)) \nabla_m \varphi\, dV_g =-\int_{B_R}e^{-f} \Phi(w)\nabla_m \varphi\, dV_g \,. 
	\end{equation*}
We also recall that $dV_g = \alpha^{-1} dx$. From the divergence theorem we have 
	\begin{equation}\label{previous}
	\begin{aligned}
	\int_{B_R}  g(e^{-f} \mathcal{A}(\nabla w), \nabla(\nabla_m \varphi))\, dx=\int_{B_R} e^{-f} \Phi(w)\nabla_m \varphi\, dx,
	\end{aligned}
	\end{equation}
	and, by integrating by parts, we get 
	\begin{equation}\label{11.16}
	\begin{aligned}
	\int_{B_R} g(\nabla_m(e^{-f} \mathcal{A}(\nabla w)), \nabla\varphi )\, dx=\int_{B_R} \nabla_m(e^{-f} \Phi(w)) \varphi\, dx\, .
	\end{aligned}
	\end{equation}

Now, we take a cut-off function $\eta\in C^{\infty}_0(B_{R})$ with $\eta=1$ in $B_r$, and for $m\in\{1,\dots,n\}$ we set $\varphi=\mathcal{A}^m(\nabla w)\eta^2$. Hence, by summing over $m$ and recalling \eqref{11.16}, we proved that 	
\begin{equation*}
\int_{B_R} g_{ij} \nabla_m (e^{-f} \mathcal{A}^i(\nabla  w))\nabla^j(\mathcal{A}^m(\nabla  w)\eta^2)\, dx =\int_{B_R}  \nabla_m(e^{-f} \Phi(w)) \mathcal{A}^m(\nabla w)\eta^2 dx \,,
	\end{equation*}
where we recall that all the repeated indexes are summed. Since
$
\nabla^j \mathcal{A}^m = g^{jk} \nabla_k \mathcal{A}^m$,  and
$$
g_{ij} \nabla_m \mathcal{A}^i \, \nabla^j \mathcal{A}^m = g_{ij}  g^{jk} \, \nabla_m  \mathcal A^i \, \nabla_k \mathcal{A}^m  = \delta_i^k  \, \nabla_m  \mathcal A^i \, \nabla_k \mathcal{A}^m = \, \nabla_m  \mathcal A^k \, \nabla_k \mathcal{A}^m  = \tr (W^2 ),
$$	
then we have
\begin{multline} \label{tazzina}
\int_{B_R} \eta^2 e^{-f} \tr W^2 dx + \int_{B_R}  \nabla_m e^{-f}  g_{ij} \mathcal{A}^i(\nabla  w) \nabla^j(\mathcal{A}^m(\nabla  w)\eta^2) dx \\ + \int_{B_R} e^{-f}  g_{ij} \nabla_m(\mathcal{A}^i(\nabla  w) )\mathcal{A}^m(\nabla  w) 2 \eta \nabla^j \eta  \, dx  =\int_{B_R}  \nabla_m(e^{-f} \Phi(w)) \mathcal{A}^m(\nabla w)\eta^2 dx \,,
\end{multline} 
where we recall that $W_i^j = \nabla_j \mathcal{A}^i(\nabla w)$.

The first term on the LHS in \eqref{tazzina} can be handled as in the proof of Theorem 4.1 in \cite[p. 681]{AKM}. More precisely, by using the properties of $\mathcal{A}$ and some tools from linear algebra, one can prove that there exists a constant $c$ (which is uniform with respect to the paramenter $\ell$ in the approximation $\mathcal A_\ell$ of $\mathcal A$) such that 
\begin{equation} \label{traceW2}
\tr W^2 \geq c\,  |W|_g^2 \,,
\end{equation}
where $|W|_g^2 =g_{ij}g^{ab}W_{a}^iW_{b}^j$ is the square of the Hilbert-Schmidt norm of $W$. We estimate the second and third integrals in \eqref{tazzina} as 
\begin{multline*}
\Big{|} \int_{B_R}  \nabla_m e^{-f}  g_{ij} \mathcal{A}^i(\nabla  w) \nabla^j(\mathcal{A}^m(\nabla  w)\eta^2) dx \Big{|} + \Big{|} \int_{B_R} e^{-f}  g_{ij} \nabla_m(\mathcal{A}^i(\nabla  w) )\mathcal{A}^m(\nabla  w) 2 \eta \nabla^j \eta  \, dx \Big{|}  \\ 
\leq  C \int_{B_R}  (|\nabla e^{-f}|_g  \eta^2 +   e^{-f} \eta |\nabla \eta|_g)  |\mathcal{A}(\nabla  w)|_g |W|_g dx + 2 \int_{B_R}  |\nabla e^{-f}|_g   |\mathcal{A}(\nabla  w)|_g^2  \eta |\nabla \eta|_g dx 
\end{multline*}
and the RHS in \eqref{tazzina} as
\begin{equation*}
\int_{B_R}  \nabla_m(e^{-f} \Phi(w)) \mathcal{A}^m(\nabla w)\eta^2 dx \leq \int_{B_R} |\nabla (e^{-f} \Phi (w))|_g | \mathcal A(\nabla  w)|_g dx \,, 
\end{equation*}
and, by combining these last two estimates and \eqref{traceW2}, from \eqref{tazzina} we obtain that
\begin{multline*}
\int_{B_R} \eta^2 e^{-f} |W|_g^2 dx \leq C \int_{B_R}  (|\nabla e^{-f}|_g  \eta^2 +   e^{-f} \eta |\nabla \eta|_g)  |\mathcal{A}(\nabla  w)|_g |W|_g dx \\ + 2 \int_{B_R}  |\nabla e^{-f}|_g   |\mathcal{A}(\nabla  w)|_g^2  \eta |\nabla \eta|_g dx 
 +   \int_{B_R} |\nabla (e^{-f} \Phi (w))|_g | \mathcal A(\nabla  w)|_g dx \,.
\end{multline*}
We use Young's inequality twice and we have
\begin{multline*}
c \int_{B_R} \eta^2  |W|_g^2\, e^{-f} dx \\ \leq  \int_{B_R}  \left(\frac{|\nabla e^{-f}|_g^2}{e^{-2f}}  \eta^2  +   |\nabla \eta|_g^2  \right) |\mathcal{A}(\nabla  w)|_g^2 \, e^{-f} dx  +   \int_{B_R} |\nabla (e^{-f} \Phi (w))|_g | \mathcal A(\nabla  w)|_g dx\,,
\end{multline*}
which completes the proof.
\end{proof}

We are ready to prove the main result of this section.

\begin{proposition} \label{prop_regularityA}
Let $d\geq 3$ and $p \leq n/2$. Let $w$ be a solution of \eqref{pb_w_Riem}. Then $\mathcal{A}(\nabla w)\in W^{1,2}_{\rm loc}(\R^d,|x|^{n-d})$.  
\end{proposition}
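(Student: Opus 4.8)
The plan is to pass to the limit in the uniform estimate of Lemma \ref{lemma_regularityA}. Fix a ball $B_R \subset \R^d \setminus \{O\}$ and let $w_\ell > 0$ solve the regularized problem \eqref{approx_1} on $\Omega = B_R$ with $w_\ell = w$ on $\partial B_R$. The first task is to check that $w_\ell \to w$ in a suitable sense as $\ell \to \infty$: since $\mathcal{A}_\ell \to \mathcal{A}$ uniformly on compact sets and satisfies the ellipticity bounds \eqref{ellipticity_approx} with $s_\ell \to 0$, one gets uniform $\DD^{1,p}_g(B_R, |x|^{n-d})$ bounds on $w_\ell$ by testing \eqref{approx_1} with $w_\ell - w$ (using that on $B_R$ the weight $|x|^{n-d}$ is bounded above and below and the RHS $w^{q-1}$ is fixed and, by Proposition \ref{bounded_sol}, in $L^\infty$). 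Standard monotone-operator arguments then give $w_\ell \rightharpoonup w$ weakly in $\DD^{1,p}_g(B_R)$ and, up to subsequences, $\nabla w_\ell \to \nabla w$ a.e., hence $\mathcal{A}_\ell(\nabla w_\ell) \to \mathcal{A}(\nabla w)$ a.e.\ and, by the growth bound in \eqref{ellipticity_approx} together with $s_\ell \to 0$, strongly in $L^{p/(p-1)}_{loc}$.

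Next I would make sure the right-hand side of \eqref{caccioAreg} is finite and $\ell$-independent. The term $|\nabla \eta|_g^2 |\mathcal{A}_\ell(\nabla w_\ell)|_g^2 |x|^{n-d}$ is controlled by the uniform $L^p$ bound on $\nabla w_\ell$ (note $|\mathcal{A}_\ell(\nabla w_\ell)|^{p/(p-1)} \sim |\nabla w_\ell|^p$). The genuinely delicate term is the weight-derivative term $\tfrac{(n-d)^2}{|x|^2} |\mathcal{A}_\ell(\nabla w_\ell)|_g^2 |x|^{n-d} = (n-d)^2 |\mathcal{A}_\ell(\nabla w_\ell)|_g^2 |x|^{n-d-2}$: on a fixed ball $B_R$ away from the origin $|x|^{-2}$ is bounded, so this is still controlled by $\int_{B_R} |\nabla w_\ell|_g^{2(p-1)}$, which is finite since $w_\ell$ is smooth and the bound is uniform in $\ell$. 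Finally $|\nabla(|x|^{n-d} w^{q-1})|_g |\mathcal{A}_\ell(\nabla w_\ell)|_g$ is integrable because $w \in L^\infty$, $w$ is $C^1$ away from the origin by the regularity of Section \ref{sect_reg1}, and $\mathcal{A}_\ell(\nabla w_\ell)$ is uniformly $L^{p/(p-1)} \subset L^1$ on $B_R$.

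With these uniform bounds in hand, \eqref{caccioAreg} gives $\sup_\ell \int_{B_r} |\nabla \mathcal{A}_\ell(\nabla w_\ell)|_g^2 |x|^{n-d}\,dx < \infty$, so $\mathcal{A}_\ell(\nabla w_\ell)$ is bounded in $W^{1,2}(B_r, |x|^{n-d})$; by reflexivity it converges weakly to some $Z \in W^{1,2}(B_r, |x|^{n-d})$. Since we already know $\mathcal{A}_\ell(\nabla w_\ell) \to \mathcal{A}(\nabla w)$ in $L^{p/(p-1)}_{loc}$ (hence a.e.), uniqueness of limits forces $Z = \mathcal{A}(\nabla w)$, giving $\mathcal{A}(\nabla w) \in W^{1,2}(B_r, |x|^{n-d})$. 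Covering $\R^d \setminus \{O\}$ by such balls yields $\mathcal{A}(\nabla w) \in W^{1,2}_{loc}(\R^d \setminus \{O\}, |x|^{n-d})$, and to upgrade to $W^{1,2}_{loc}(\R^d, |x|^{n-d})$ — i.e.\ to include a neighborhood of the origin — I would use Lemma \ref{lemma_gradient_x}, which gives $|\nabla w|_g \le C/|x|$, hence $|\mathcal{A}(\nabla w)|_g \le C|x|^{1-p}$ and correspondingly $|\nabla \mathcal{A}(\nabla w)|_g^2 |x|^{n-d} \lesssim |x|^{2(1-p)-2+n-d}$; here is exactly where the hypothesis $p \le n/2$ enters, ensuring this is integrable near $O$ (and one also needs the analogous bound on $\mathcal{A}(\nabla w)$ itself to pass the Caccioppoli estimate across small balls centered at the origin, controlling in particular the problematic $(n-d)^2 \int |\mathcal{A}(\nabla w)|_g^2 |x|^{n-d-2}$ term). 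The main obstacle is precisely this last point: getting a Caccioppoli bound that is uniform as the small ball shrinks to the origin, which is why the pointwise decay $|\nabla w|_g \le C/|x|$ from Lemma \ref{lemma_gradient_x} together with $p \le n/2$ is essential and cannot obviously be dispensed with.
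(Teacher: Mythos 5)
The part of your argument away from the origin — passing $\ell\to\infty$ in the Caccioppoli inequality of Lemma \ref{lemma_regularityA}, using uniform energy bounds and weak compactness to identify the limit of $\mathcal{A}_\ell(\nabla w_\ell)$ with $\mathcal{A}(\nabla w)$ — is sound and is essentially the same route the paper takes to conclude $\mathcal{A}(\nabla w)\in W^{1,2}_{\rm loc}(\R^d\setminus\{O\},|x|^{n-d})$.

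The gap is in your treatment of a neighborhood of the origin. From $|\nabla w|_g \le C/|x|$ you correctly get the pointwise bound $|\mathcal{A}(\nabla w)|_g\le C|x|^{1-p}$, but you then write ``correspondingly $|\nabla\mathcal{A}(\nabla w)|_g^2|x|^{n-d}\lesssim |x|^{2(1-p)-2+n-d}$.'' This step does not follow: a pointwise bound on $\mathcal{A}(\nabla w)$ gives no pointwise control on its gradient, and in fact such a pointwise bound is \emph{not available} here. Solutions of $p$-Laplace–type equations are in general only $C^{1,\a}_{\rm loc}$, and $D^2w$ (hence $\nabla\mathcal{A}(\nabla w)$) can blow up at points where $\nabla w=0$; the regularity one has on $\nabla\mathcal{A}(\nabla w)$ is only integral ($L^2$-type), which is exactly what the proposition is trying to prove. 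You later acknowledge this is ``the main obstacle,'' but the argument you offer to overcome it is precisely the unjustified pointwise estimate.

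What the paper does instead — and what you are missing — is to avoid any pointwise bound on $\nabla\mathcal{A}(\nabla w)$ by rescaling to a fixed annulus. Setting $\zeta_\mu(x)=w(\mu x)$ on $\Omega=B_{5/2}\setminus\overline{B}_{1/2}$, the function $\zeta_\mu$ solves $\mathcal{L}\zeta_\mu=-\mu^p\zeta_\mu^{q-1}$; by the bound $|\nabla w|_g\le C/|x|$ and the uniform $L^\infty$ estimate, $\zeta_\mu$ and $|\nabla\zeta_\mu|_g$ are bounded on $\Omega$ \emph{independently of $\mu$}. Applying Lemma \ref{lemma_regularityA} (and passing $\ell\to\infty$) on this fixed annulus yields
\[
\int_{B_2\setminus B_1}|\nabla\mathcal{A}(\nabla\zeta_\mu)|_g^2\,|x|^{n-d}\,dx\le C
\]
with $C$ independent of $\mu$. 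Undoing the scaling gives
\[
\int_{B_{2\mu}\setminus B_\mu}|\nabla\mathcal{A}(\nabla w)|_g^2\,|x|^{n-d}\,dx\le C\,\mu^{\,n-2p},
\]
and summing over dyadic radii $\mu=2^{-j}$ converges precisely when $n-2p>0$, i.e.\ $p<n/2$. So the hypothesis on $p$ enters at the level of summing the integral Caccioppoli bounds over dyadic annuli, not through a pointwise estimate on $\nabla\mathcal{A}(\nabla w)$. Your heuristic exponent count happens to produce the same threshold, but without the annulus-scaling argument you have no rigorous bound near $O$, so the proof as written does not go through.
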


\begin{proof}
From Lemma \ref{lemma_regularityA} it is clear that it is enough to prove the result in $B_1$. Since the approximation argument used in the proof of Lemma \ref{lemma_regularityA} may present difficulties due to the degeneracy of the weight at the origin, we argue by scaling as already done in the proof of Lemma \ref{lemma_gradient_x}. 

Let $\Omega= B_{5/2} \setminus \overline B_{1/2}$. For any $\mu \in (0,1)$ we define 
$$
\zeta_\mu (x) := w(\mu x) \,,
$$ 
for any $x \in \Omega$, and hence $\zeta_\mu$ satisfies \eqref{eq_L_zeta} in $\Omega$ and $ \zeta_\mu$ and $|\nabla \zeta_\mu|_g$ are uniformly bounded independently of $\mu$. Now we apply Lemma \ref{lemma_regularityA} with $\Omega=  B_{5/2} \setminus \overline B_{1/2}$ and $\Omega'=B_2 \setminus \overline B_1$. Since $\zeta_\mu$ satisfies \eqref{eq_L_zeta}, we consider $w_\ell$ to be the solution to
	\begin{equation}\label{approx_100}
	\begin{cases}
	\frac{1}{|x|^{(n-d)}}\diver(|x|^{(n-d)} \mathcal{A}_\ell(\nabla w_\ell))=  -\mu^p \zeta_\mu^{q-1}  & \text{in } \Omega
	\\
	w_\ell=\zeta_\mu & \text{on } \partial \Omega\,.
	\end{cases}
	\end{equation}
From Lemma \ref{lemma_regularityA} we have that
\begin{multline} \label{dis_step1}
c \int_{\Omega'}  |W_{\ell} |_g^2\, |x|^{(n-d)} dx  \leq  \int_{\Omega}  \left(\frac{(n-d)^2}{|x|^2}  +   |\nabla \eta|_g^2  \right) |\mathcal{A_\ell}(\nabla  w_\ell)|_g^2 \,|x|^{(n-d)} dx  \\ +  \mu^p  \int_{\Omega} |\nabla (|x|^{(n-d)} \zeta_\mu^{q-1})|_g | \mathcal A_\ell(\nabla  w_\ell)|_g dx \,,
\end{multline}
where $c$ does not depends on $\ell$ and $\mu$. Since $d \geq 3$ and $w_\ell$ and $\zeta_\mu$ are bounded in $C^{1,\alpha}$ uniformly with respect to $\ell$, we obtain that
\begin{equation*}
 \int_{\Omega'}   |W_{\ell} |_g^2\, |x|^{(n-d)} dx  \leq C \,,
\end{equation*}
where $C$ does not depend on $\ell$ and $\mu$ (here we are assuming that $0 < \mu \leq 1$). 
This implies that $\nabla \mathcal A_\ell (\nabla w_\ell)$ is uniformly bounded in $L^2(\Omega', |x|^{n-d})$. Moreover we recall that from \cite{DiBenedetto,Tolksdorf} we have that $w_\ell$ are uniformly bounded and converge to $w$ in $C^{1,\alpha}_{loc} (\Omega')$. Since $\mathcal A_\ell(\nabla w_\ell)$ converges to some function $\hat{\mathcal A}$ weakly in $W^{1,2}_{loc}$, then $\hat{\mathcal A} = \mathcal A (\nabla w)$. In particular we have
\begin{equation} \label{Wlimited}
 \int_{B_2 \setminus B_1} |\nabla \mathcal A (\nabla \zeta_\mu(x))|^2 |x|^{n-d} dx \leq C \,,
\end{equation}
where $C$ does not depend on $\mu$.

Now, we consider a sequence of radii $r_j=2^{-j}$ and set
$$
I_j = \int_{B_{2r_j} \setminus B_{r_j}} |\nabla \mathcal A (\nabla w(x))|_g^2\, |x|^{(n-d)} dx \,.
$$
From the scaling properties of $\mathcal A (\nabla w)$, we have that
$$
\int_{B_{2\mu} \setminus B_\mu} |\nabla \mathcal A (\nabla w(x))|^2 |x|^{n-d} dx
= \mu^{n-2p} \int_{B_2 \setminus B_1} |\nabla \mathcal A (\nabla \zeta_\mu(y))|^2 |y|^{n-d} dy  \leq C \mu^{n-2p} \,,
$$
where the last inequality follows from \eqref{Wlimited}. By choosing $\mu=r_j$ we readily find that 
$$
\int_{B_1} |\nabla \mathcal A (\nabla w(x))|^2 |x|^{n-d} dx \leq \sum_{j=0}^{+\infty} I_j \leq C \sum_{j=0}^{+\infty} \frac{1}{2^{n-2p}} 
$$
which is bounded provided that $p < n/2$, which implies the assertion.
\end{proof}

\section{Proof of Theorems \ref{th_main_p} and \ref{main_thm_Riem}} \label{section_finalproof}
In this section we give the proof of  Theorems \ref{th_main_p} and \ref{main_thm_Riem}. More precisely, we prove Theorem \ref{main_thm_Riem} and then Theorem \ref{th_main_p} follows as a corollary. It will be convenient to work with $v$ which is given by
\begin{equation}\label{def_v_p_II}
v(x)=w(x)^{-\frac{q-p}{p}},
\end{equation}
where $w$ is the solution to \eqref{EL_w_p_II}. We mention that, in our approach, the function $v$ is the analogue of the so-called  pressure function in \cite{DEL}. Clearly, $v$ inherits some regularity properties from $w$ (and hence from $u$). In particular, $v$ is of class $C^{1,\gamma}_{loc}(\R^d \setminus \{O\})$ and it satisfies 
\begin{equation} \label{eq_v_II}
\mathcal{L} v =\frac{n(p-1)}{p}\frac{|\nabla v|_g^p}{v}+\left(\frac{p}{n-p}\right )^{p-1}\frac{1}{v},
\end{equation}
where 
\begin{equation*}
\mathcal{L} v :  = |x|^{-(n-d)}\diver (|x|^{n-d}|\nabla v|_g^{p-2}\nabla v) \,.
\end{equation*}
Moreover, from Proposition \ref{stime_Vetois} we have the following asymptotic estimates 
\begin{equation} \label{asympt_v}
C_1 |x|^{\frac{p}{p-1}} \leq v(x) \leq C_2 |x|^{\frac{p}{p-1}}  \,, \quad C_1 |x|^{\frac{1}{p-1}}  \leq |\nabla v(x)|_g \leq C_2 |x|^{\frac{1}{p-1}}  \,, \quad |\Hess\, v(x)|_g \leq C_2 |x|^{\frac{2-p}{p-1}},
\end{equation}
for some $0<C_1\leq C_2$ and every $x\in \R^d \backslash B_R$, with $R>0$ large enough.  
By setting 
$$
\mathcal A (\nabla v) = |\nabla v|_g^{p-2}\nabla v \,,
$$
from Proposition \ref{prop_regularityA} we also have that $ \mathcal A (\nabla v) \in W^{1,2}_{loc}(\R^d, |x|^{n-d}) $, and we set
\begin{equation} \label{W_def}
W_i^j:= \nabla_i \mathcal A^j(\nabla v)\,,
\end{equation}
for $i,j=1,\ldots,d$.

The proof of Theorem \ref{th_main_p} is based on the following lemma, which gives an integral version of the differential identity proved in Proposition \ref{prop_diff_id}.

\begin{lemma} \label{lemma_integral_ident}
Let $v$ and $W$ be given by \eqref{def_v_p_II} and \eqref{W_def}, respectively. The following integral identity
\begin{multline}	\label{int_pos}
\int_{\R^d} v^{1-n} \Big[ (\mathcal{L}  v)^2-\trace (W^2)- (n-1) \frac{2p-1}{p} \frac{|\nabla v|_g^p}{v} \mathcal{L}v + n(n-1) \frac{p-1}{p} \frac{|\nabla v|_g^{2p}}{v^2}  
\\ - \ric_g\big(\mathcal A(\nabla v),\mathcal A(\nabla v)\big) -  \Hess f \big(\mathcal A(\nabla v),\mathcal A(\nabla v)\big) \Big  ] |x|^{n-d}dx = 0 
	\end{multline}
holds.
\end{lemma}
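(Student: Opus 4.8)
The plan is to integrate the differential identity \eqref{ident_final} of Proposition \ref{prop_diff_id}, applied with $e^{-f} = |x|^{n-d}$, $V(\nabla v) = \frac1p |\nabla v|_g^p$, and the exponent $\gamma = 1-n$, over $\R^d$, and to check that the divergence term on the left-hand side of \eqref{ident_final} integrates to zero. With this choice of $\gamma$, the combination of zeroth-order terms in $|\nabla v|_g^p$ appearing on the right-hand side of \eqref{ident_final} collapses: writing out $\gamma\frac{2p-1}{p} v^{\gamma-1}|\nabla v|_g^p \mathcal Lv$ and $\gamma(\gamma-1)\frac{p-1}{p}v^{\gamma-2}|\nabla v|_g^{2p}$ with $\gamma = 1-n$ produces exactly the coefficients $-(n-1)\frac{2p-1}{p}$ and $n(n-1)\frac{p-1}{p}$ in \eqref{int_pos}, and the Hessian-of-$f$ and Ricci terms become $-v^{1-n}|\nabla v|_g^{2(p-2)}\big(\ric_g(\nabla v,\nabla v) + \Hess f(\nabla v,\nabla v)\big)$, which is $-v^{1-n}\big(\ric_g(\mathcal A(\nabla v),\mathcal A(\nabla v)) + \Hess f(\mathcal A(\nabla v),\mathcal A(\nabla v))\big)$ since $\mathcal A(\nabla v) = |\nabla v|_g^{p-2}\nabla v$. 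Thus, granting that the boundary/divergence contribution vanishes, \eqref{int_pos} follows immediately.

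The real work is justifying that passing from the pointwise identity to the integrated identity is legitimate, since $v$ is only $C^{1,\gamma}_{loc}(\R^d\setminus\{O\})$ and the identity involves second derivatives of $\mathcal A(\nabla v)$. Here I would proceed in three steps. First, I would work on the punctured annulus and approximate: replace $\mathcal A$ by the mollified fields $\mathcal A_\ell$ of \eqref{ajz} and $v$ by the solutions $v_\ell$ of the corresponding non-degenerate equations (as in Lemma \ref{lemma_regularityA} and Proposition \ref{prop_regularityA}), for which \eqref{ident_final_generale} holds classically; multiply by a cutoff $\eta_{\e,R}$ that is $1$ on $B_R\setminus B_\e$ and supported in $B_{2R}\setminus B_{\e/2}$, integrate, and use the divergence theorem so that the left-hand side becomes a flux integral over the two spherical shells $\{|x|=\e/2\text{-ish}\}$ and $\{|x|=2R\text{-ish}\}$ weighted appropriately. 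Second, I would pass to the limit $\ell\to\infty$: by Proposition \ref{prop_regularityA}, $\mathcal A_\ell(\nabla v_\ell)\to\mathcal A(\nabla v)$ weakly in $W^{1,2}_{loc}(\R^d,|x|^{n-d})$ and $v_\ell\to v$ in $C^{1,\alpha}_{loc}$, which handles all the terms that are at most quadratic in $W=\nabla\mathcal A(\nabla v)$ (the only terms of that order are $(\mathcal Lv)^2 - \trace(W^2)$, and these are weakly lower/upper semicontinuous or, better, pass to the limit because the problematic cross terms vanish after the integration by parts performed in Lemma \ref{lemma_regularityA}). Third, I would send $\e\to 0$ and $R\to\infty$ and show the flux terms disappear.

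The main obstacle, and the step requiring the asymptotic estimates collected in Sections \ref{sect_reg1}--\ref{section_reg_A}, is controlling the two flux integrals. At infinity, the estimates \eqref{asympt_v} give $v\sim|x|^{p/(p-1)}$, $|\nabla v|_g\sim|x|^{1/(p-1)}$, $|\Hess v|_g\lesssim|x|^{(2-p)/(p-1)}$, so one computes the homogeneity of each term in the flux: the weight is $|x|^{n-d}$, the shell has area $\sim R^{d-1}$, and a power count using $\gamma=1-n$ shows the integrand over $\{|x|\sim R\}$ decays like a negative power of $R$ — this is where the hypothesis enters only through making $v$ admissible, and the decay is automatic. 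At the origin the flux over $\{|x|\sim\e\}$ must be shown to vanish; using Lemma \ref{lemma_gradient_x} ($|\nabla v|_g\lesssim|x|^{1/(p-1)}\cdot|x|^{-?}$, more precisely the bound $|\nabla w|_g\le C/|x|$ translated to $v$) together with the integrability $\mathcal A(\nabla v)\in W^{1,2}_{loc}(\R^d,|x|^{n-d})$ from Proposition \ref{prop_regularityA} (valid under (H2), $p<n/2$, or trivially under (H1), $n=d$), one bounds the boundary flux by a vanishing quantity — this is exactly the point where the technical integrability condition \eqref{tech_cond} and the distinction between hypotheses (H1) and (H2) are used. I expect the $\e\to0$ flux estimate to be the delicate part, requiring a careful Hölder argument pairing $\mathcal A(\nabla v)\nabla\mathcal A(\nabla v)$ (an $L^1(|x|^{n-d})$-type quantity near $O$ by Cauchy--Schwarz and Proposition \ref{prop_regularityA}) against the remaining powers of $v$ and $|x|$, possibly along a sequence $\e_k\to0$ chosen so that the shell integrals tend to zero.
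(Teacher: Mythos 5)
Your proposal follows essentially the same approach as the paper: apply Proposition \ref{prop_diff_id} (in the form \eqref{ident_final}) with $\gamma=1-n$ and $e^{-f}=|x|^{n-d}$, integrate against $|x|^{n-d}\,dx$, and kill the boundary flux at infinity using the decay estimates \eqref{asympt_v}. The paper is even terser than you — it integrates directly over $B_R$ without excising a small ball around the origin, relying on the $W^{1,2}_{\rm loc}(\R^d,|x|^{n-d})$ regularity of $\mathcal A(\nabla v)$ from Proposition \ref{prop_regularityA} (which is exactly where (H1)/(H2) enter) and deferring the approximation argument to that proposition — so your excision-and-inner-flux step is the same technical content made explicit rather than a different route.
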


\begin{proof}

This identity can be obtained by approximation starting from \eqref{ident_final_generale} in Proposition \ref{prop_diff_id} by choosing $\gamma =1 -n$. The approximation argument is analogous to the one given in the proof of Proposition \ref{prop_regularityA}. For this reason we omit the details of the approximation argument and we consider directly \eqref{ident_final} instead of \eqref{ident_final_generale}.

We multiply \eqref{ident_final} by $|x|^{n-d}$ and integrate in $dx$ in a ball of sufficiently large radius $R$. From the regularity properties of $v$ and $\mathcal{A}(\nabla v)$ and by using the divergence theorem, we have 
\begin{multline*}
\int_{\partial B_R} g \left( -e^{-f} v^\gamma \mathcal{A}^i(\nabla v) \nabla_i \mathcal{A}^j(\nabla v) + v^\gamma  e^{-f} \mathcal{A}^j(\nabla v) \mathcal{L}v  + \gamma\frac{p-1}{p} v^{\gamma-1} |\nabla v|_g^p\, e^{-f}\mathcal{A}^j(\nabla v) , \nu \right)  \\
	= \int_{B_R} \Big\{ v^{\gamma} \left[ (\mathcal{L} v)^2 - \trace (W^2) \right] + \gamma \frac{2p-1}{p} v^{\gamma-1} |\nabla v|_g^p \mathcal{L}v + \gamma(\gamma-1)\frac{p-1}{p} v^{\gamma-2} |\nabla v|_g^{2p} \\
	- v^{\gamma}  {\rm Ric}\big(\mathcal A(\nabla v),\mathcal A(\nabla v)\big) - v^{\gamma}  \Hess f \big(\mathcal A(\nabla v),\mathcal A(\nabla v)\big)\Big\} e^{-f} dx \,,
\end{multline*}
where $ e^{-f} = |x|^{n-d}$ and $\gamma = 1-n$.
By letting $R \to \infty$ and using the decay estimates \eqref{asympt_v}, one has that the boundary integral on the LHS vanishes at infinity and then we get \eqref{int_pos}.    
\end{proof}

We are now ready to prove our main result. 

\vspace{1em}

\begin{proof}[Proof of Theorems \ref{th_main_p} and \ref{main_thm_Riem}]
We prove Theorem \ref{main_thm_Riem} and then Theorem \ref{th_main_p} immediately follows by recalling that $u(x) = c w( |x|^{\alpha-1} x ) $ for some constant $c$. We multiply \eqref{eq_v_II} by $v^{-n}$ and integrate in $dV_g$ over $\R^d$. By using the divergence
	theorem and the decay estimates in Proposition \ref{stime_Vetois} we obtain that 
	\begin{equation} \label{eq_integr}
	-\frac{n}{2}  \int_{\R^d} |x|^{n-d}|\nabla v|_g^2 v^{-n-1}dx+\frac{2}{n-2}\int_{\R^d} |x|^{n-d}v^{-n-1}dx =0.
	\end{equation}	
	Now we use \eqref{eq_v_II} in \eqref{int_pos} and, by taking into account \eqref{eq_integr}, we find 
	\begin{multline}
	\label{int_pos2}
	\int_{\R^d}  v^{1-n} \Big ( \frac1n (\mathcal{L} v)^2-\trace (W^2)- |\nabla v|_g^{2(p-2)}\ric_g(\nabla v,\nabla v) - |\nabla v|_g^{2(p-2)}\Hess_g f  \Big  ) |x|^{n-d}dx= 0, 
	\end{multline}
where we recall that $W$ is given by \eqref{W_def}.

Now we show that the quantity inside the integral in \eqref{int_pos2} is non-negative almost everywhere. We mention that, since $v \in C^{1,\gamma}_{loc} $ outside the origin and $\mathcal A (\nabla v) \in W^{1,2}_{loc}$, the following calculations make sense almost everywhere. 

We notice that $\mathcal L v$ can be written as
\begin{equation}  \label{jon}
\mathcal L v = L_p v +  (n-d)   |\nabla v|_g^{p-2} s(v) \,,
\end{equation}
where
$$
L_p v = \diver( |\nabla v|_g^{p-2}\nabla v) = \diver (\mathcal{A}(\nabla v)) 
$$
is the usual $p-$Laplace operator and 
$$
s(v)=g(\nabla \log |x|, \nabla v)\,.
$$
We recall that by, using Newton's inequality (see \cite[Lemma 3.2]{CianchiSalani}), one has
\begin{equation} \label{Newtonineq}
\tr (W^2)\geq \frac 1 d (L_p v)^2\,
\end{equation}
and from \eqref{jon} we have  
	\begin{equation}\label{Newton_c}
	\tr (W^2)\geq \frac 1 d \left ((\mathcal{L} v)^2-2 (n-d)    |\nabla v|_g^{p-2}s(v) \mathcal{L} v + (n-d)^2  |\nabla v|_g^{2(p-2)}s(v)^2 \right ). 
	\end{equation}
Since from \eqref{Newton_c}
\begin{equation} \label{cisiamoquasi}
\frac 1 n(\mathcal{L} v)^2- \tr(W^2)\leq - \frac {n-d}{nd}(\mathcal{L} v)^2+\frac {2 (n-d)}{d} |\nabla v|_g^{p-2} s(v) \mathcal{L} v -\frac {(n-d)^2}{d} |\nabla v|_g^{2(p-2)}  s(v)^2 \,. \\
\end{equation}
Now we distinguish two cases: $n=d$ and $n > d$ and prove that 
\begin{equation} \label{conclusion}
W_i^j=\lambda(x) \delta_i^j \quad \text{for a.e. }x\in \R^d,
	\end{equation}
$i,j \in \{1,\ldots,d\}$, for some function $\lambda: \R^d\to \R$. 

\noindent \emph{Case 1: $n=d$}. In this case \eqref{cisiamoquasi} implies that 
$$
\dfrac 1 d(\mathcal{L} v)^2- \tr(W^2)    \leq 0
$$ 
and from \eqref{int_pos2} we obtain that the equality sign holds in Newton's inequality and then \eqref{conclusion} follows.

\noindent \emph{Case 2: $n>d$}. From \eqref{cisiamoquasi} and by using 
\begin{equation} \label{disugnd}
2 |\nabla v|_g^{p-2}s(v)\mathcal{L} v \leq n|\nabla v|_g^{2(p-2)}s(v)^2+\frac 1 n (\mathcal{L}v)^2 \,,
\end{equation}
we get 
\begin{equation} \label{2}
\frac 1 n(\mathcal{L} v)^2- \tr(W^2)\leq (n-d) |\nabla v|_g^{2(p-2)}s(v)^2 \,.
\end{equation}
From \eqref{2}, \eqref{ricci} and \eqref{hessiano}, we find that  
\begin{multline*}
	\frac 1 n(\mathcal{L} v)^2- \tr(W^2)- \ric_g(\mathcal A(\nabla v),\mathcal A(\nabla v))- \Hess f  (\mathcal A(\nabla v),\mathcal A(\nabla v)) \leq\\ \frac{2-d+\alpha^2(n-2)}{|x|^2}|\nabla v|_g^{2(p-2)}\left (|\nabla v|^2-\frac{(\nabla v\cdot x)^2}{|x|^2}\right )
\end{multline*}
and then, by using condition \eqref{alpha_cond}
\begin{equation*} 
	\alpha^2\leq \frac{d-2}{n-2},
	\end{equation*}
we find
\begin{equation*}
\frac 1 n(\mathcal{L} v)^2- \tr(W^2)- \ric_g(\mathcal A(\nabla v),\mathcal A(\nabla v))- \Hess_g f  (\mathcal A(\nabla v),\mathcal A(\nabla v))  \leq 0 \,.
\end{equation*}	
This last inequality, which holds a.e. in $\mathbb{\R}^d$, and \eqref{int_pos2} imply that the equality sign must hold, and hence all the inequalities in this proof are actually equalities. In particular, the equality sign must hold in the following two inequalities: 
$$
\tr (W^2) \geq \frac 1 d (L_p v)^2\,,
$$
and
$$
2 |\nabla v|_g^{p-2}s(v)\mathcal{L} v \leq n|\nabla v|_g^{2(p-2)}s(v)^2+\frac 1 n (\mathcal{L}v)^2 \,,
$$
and then we must have that \eqref{conclusion} holds. We notice that in this case, the last inequality also implies that
\begin{equation} \label{ndivd}
\mathcal L v = n |\nabla v|_g^{p-2} s(v) \,.
\end{equation}
%
%

Hence, in both Cases 1 and 2 we have that \eqref{conclusion} holds and then
$$
L_p v = \tr W =   \lambda (x) d \,, 
$$ 

\noindent We notice that from \eqref{eq_v_II} and \eqref{jon} we have that  
\begin{equation*}   
\lambda (x) d  = \frac{n(p-1)}{p}\frac{|\nabla v|_g^p}{v}+\left(\frac{p}{n-p}\right )^{p-1}\frac{1}{v} -   (n-d)   |\nabla v|_g^{p-2} s(v)\,.
\end{equation*}

According to the regularity of $v$, we know that $\lambda$ is locally of class $C^\gamma$ in $\R^d \setminus\{O\}$ and, from elliptic regularity theory, $\lambda$ is $C^{1,\gamma}$ at points where $\nabla v \neq 0$ in $\R^d \setminus\{O\}$. Thanks to the regularity of $\lambda$ at points where $\nabla v \neq 0$, for any fixed $i,j\in \{1,\ldots,d\}$ we have that (in the following formula repeated indices are not summed)
$$
\nabla_j \lambda (x) = \nabla_j \nabla_i \mathcal A^i (\nabla v(x)) = \nabla_i \nabla_j \mathcal A^i (\nabla v(x)) - R_{jik}^i \mathcal A^k(\nabla v(x)) = \nabla_i ( \lambda(x) \delta_j^i)   - R_{jik}^i \mathcal A^k(\nabla v(x))   \,,
$$
where we used \eqref{conclusion}.
By summing over $i=1,\ldots, d$ , we obtain that 
$$
(d-1) \nabla_j \lambda (x) = - R_{jk} \mathcal A^k(\nabla v(x))  \,,
$$
where $R_{jk}$ are the components of the Ricci tensor given by \eqref{Ricci_jk}.
Hence we have that
$$
 R_{jk} \mathcal A^k(\nabla v(x)) = (1-\alpha^2) \frac{d-2}{|x|^2} \left( \mathcal A_j - \left(\mathcal A \cdot \frac{x}{|x|} \right) \frac{x_j}{|x|}  \right)\,, 
$$
the last two equations imply that 
$$
 \nabla_j \lambda (x) = (\alpha^2-1) \frac{d-2}{d-1} \frac{1}{|x|^2} \left( \mathcal A_j - \left(\mathcal A \cdot \frac{x}{|x|} \right) \frac{x_j}{|x|}  \right)\,  \,,
$$
and then $\nabla \lambda$ does not have radial components. Hence we obtain that on each connected component of $\{\nabla v \neq 0\}$ either $\lambda$ is constant (for $\alpha=1$) or $\lambda$ is zero-homogeneous (for $\alpha <1$). This property actually holds in the whole space, since $\lambda$ is continuous and $\{\nabla v \neq 0\}$ has no interior points.

If $\alpha=1$ (i.e. $a=b$) then the fact that $\lambda$ is constant implies that $v(x) = c_1 + c_2 |x-x_0|^{\frac{p}{p-1}}$. If $\alpha < 1$ then we have to work a little more. In this case we know that $\lambda$ is zero-homogeneous and hence from \eqref{conclusion} we have that $\nabla_j \mathcal A^i (\nabla v)$ is zero homogeneous for any fixed $i,j=1,\ldots,d$. This implies that $\mathcal A (\nabla v)$ is one-homogeneous up to an additive constant. Since 
$$
\mathcal L v=  \tr(W) +  (n-d)   |\nabla v|_g^{p-2} s(v)
$$ 
and from \eqref{ndivd} we obtain that $\mathcal L v$ is zero-homogenous. By using \eqref{eq_v_II} we have that $v f_0 =|f_1 + x_0|^{\frac{p}{p-1}} + c$, where $c$ is a constant, $x_0 \in \R^d$, $f_0:= \mathcal L v$ is a zero-homogeneous function and $f_1$ is a vectorial one-homogeneous function. By letting $x \to \infty$ and using Proposition \ref{stime_Vetois}, we obtain that $f_1$ is constant in the angular direction. Since $v$ is continuous at the origin (see \cite{ColoradoPeral}), we obtain that also $f_0$ is constant. This implies that
$$
v(x) = A |x+x_0|^{\frac{p}{p-1}} + B 
$$
for some constant $A$ and $B$ and some $x_0 \in \R^d$. By using this expression in \eqref{eq_v_II} we conclude.
\end{proof}

\medskip

\section*{Acknowledgments} 
\noindent The authors thank Luigi Vezzoni for useful discussions and remarks.
The authors have been partially supported by the ``Gruppo Nazionale per l'Analisi Matematica, la Probabilit\`a e le loro Applicazioni'' (GNAMPA) of the ``Istituto Nazionale di Alta Matematica'' (INdAM, Italy). R.C. has been partially supported by the PRIN 2017 project ``Qualitative and quantitative aspects of nonlinear PDEs''.


\begin{thebibliography}{100}
%

\bibitem{Alvino1} A. Alvino, F. Brock, F. Chiacchio, A. Mercaldo, M.R. Posteraro,  \emph{Some isoperimetric inequalities on $\R^N$ with respect to weights $|x|^\alpha$}, J. Math. Anal. Appl. 451 (2017) 280-318.

\bibitem{AKM} B. Avelin, T. Kuusi, G. Mingione. \emph{Nonlinear Calder\'on-Zygmund Theory in the Limiting Case}. Arch. Rational Mech. Anal. (2018) 227-663.

%
%
%
%




\bibitem{BiCi} C. Bianchini, G. Ciraolo. \emph{Wulff shape characterizations in overdetermined anisotropic elliptic problems.} Comm. Partial Differential Equations, Vol. 43 (2018), 790-820.
%
\bibitem{BCS} C. Bianchini, G. Ciraolo, P. Salani. \emph{An overdetermined problem for the anisotropic capacity.} Calc. Var. Partial Differential Equations, 55-84 (2016).
%
%
%
%
%
%
%
%

\bibitem{ByeonWang} J. Byeon and Z.-Q. Wang, \emph{Symmetry breaking of extremal functions for the Caffarelli-Kohn-Nirenberg inequalities}, Commun. Contemp. Math. 4 (2002), no. 3, 457-465.

\bibitem{CaldiroliMusina} P. Caldiroli and R. Musina, Symmetry breaking of extremals for the Caffarelli-Kohn-Nirenberg inequalities in a non-Hilbertian setting, Milan J. Math. 81 (2013), no. 2, 421-430.

\bibitem{CGS} L. A. Caffarelli, B. Gidas, J. Spruck. \emph{Asymptotic symmetry and local behavior of semilinear elliptic equations with critical Sobolev growth.} Comm. Pure Appl. Math. {\bf 42} (1989), no. 3, 271-297. 
%
%
%
\bibitem{CianchiSalani} A. Cianchi, P. Salani. \emph{Overdetermined anisotropic elliptic problems.} Math. Ann. {\bf 345} (2009), no. 4, 859-881.

\bibitem{CKN}
L. Caffarelli, R. Kohn, L. Nirenberg. \emph{First order interpolation inequalities
with weights}, Compositio Math. 53 (1984),  3, 259--275.
	
\bibitem{CM} Caldiroli Musina Milan J. Math.

\bibitem{CatrinaWang} F. Catrina, Z.-Q. Wang, \emph{On the Caffarelli-Kohn-Nirenberg inequalities: sharp constants, existence (and nonexistence), and symmetry of extremal functions}, Comm. Pure Appl. Math. 54 (2), 229-258 (2001).


\bibitem{CCR} G. Ciraolo, R. Corso, A. Roncoroni. \emph{Classification and non-existence results for weak solutions to quasilinear elliptic equations with Neumann or Robin boundary conditions},  {J. Funct. Anal.}, 280 (2021), 108787. 

\bibitem{AntoniniCiraoloFarina} C.A. Antonini, G. Ciraolo, A. Farina, \emph{Interior regularity results for anisotropic quasilinear equations}, forthcoming.

\bibitem{CFR} G. Ciraolo, A. Figalli, A. Roncoroni. \emph{Symmetry results for critical anisotropic $p$-Laplacian equations in convex cones}. Geom. Funct. Anal., 30 (2020), 770-803. 




\bibitem{ColoradoPeral} E. Colorado, I. Peral,  Eigenvalues and bifurcation for elliptic equations with mixed Dirichlet-Neumann boundary conditions related to Caffarelli-Kohn-Nirenberg inequalities. Topol. Methods Nonlinear Anal. 23, 239-273 (2004)



\bibitem{CNV}
D. Cordero-Erausquin, B. Nazaret, C. Villani.
\emph{A mass-transportation approach to sharp Sobolev and Gagliardo-Nirenberg inequalities.}Adv. Math. {\bf 182} (2004), no. 2, 307-332. 



\bibitem{DPD} M. Del Pino, J. Dolbeault, Best constants for Gagliardo-Nirenberg inequalities and application to
nonlinear diffusions, J. Math. Pures Appl. 81 (9) (2002) 847-875.



\bibitem{DEL}
J. Dolbeault, M. Esteban, M. Loss. \emph{Rigidity versus symmetry breaking via nonlinear flows on cylinders and Euclidean spaces}. Invent. Math. 44, (2016), 397--440. 

\bibitem{DongLamLu} M. Dong, N. Lam, G. Lu, \emph{Sharp weighted Trudinger-Moser and Caffarelli-Kohn-Nirenberg inequalities and their extremal functions}, Nonlinear Anal. 31, 297-314 (2014).
%
%
%

\bibitem{DiBenedetto} E. DiBenedetto. \emph{$C^{1+\alpha}$ local regularity of weak solutions of degenerate elliptic equations.} Nonlinear Anal. {\bf 7} (8) (1983) 827-850.
%
%
%

\bibitem{FS} V. Felli and M. Schneider, Perturbation results of critical elliptic equations of Caffarelli-Kohn-Nirenberg type. J. Differ. Equations 191(1), 121-142 (2003).


\bibitem{fusco} I. Fonseca, N. Fusco. {\em Regularity results for anisotropic image segmentation models.} Ann. Scuola Norm. Sup. Pisa Cl. Sci. (IV) {\bf 24} (1997), 463-499.

\bibitem{Vietnam}
L. C. Nhan, K. Ho, L. X. Truong. \emph{Regularity of solutions for a class of quasilinear elliptic equations related to Caffarelli-Kohn-Nirenberg inequality}. arXiv:2008.13323 (2020). 

\bibitem{GNN} B. Gidas, W. M. Ni, L. Nirenberg. \emph{Symmetry of positive solutions of nonlinear elliptic equations in $\mathbb{R}^n$.} Mathematical analysis and applications, Part A, pp. 369-402, Adv. in Math. Suppl. Stud., 7a, Academic Press, New York-London, 1981. 
%
%
\bibitem{GT} D. Gilbarg, N. S. Trudinger. Elliptic partial differential equations of second order, Springer-Verlag, Berlin-New York, 1977.



\bibitem{ILIN} V.P. Il'in. \emph{Some integral inequalities and their applications in the theory of differentiable functions of several variables}. Mat. Sb. (N.S.), 54 (96) (1961), 331-380.


%
%
%
%
%
%
%
%
%

\bibitem{LamLu} N. Lam, G. Lu , \emph{Sharp constants and optimizers for a class of the Caffarelli-Kohn-Nirenberg inequalities}, Adv. Nonlinear Stud., 17 (3) (2017), pp. 457-480.

\bibitem{Lin} C. S. Lin, Interpolation inequalities with weights, Comm. Partial Differential Equations, 11, (1986), 1515-1538. 

\bibitem{Mazya} V.G. Maz'ja, Sobolev spaces, Springer Series in Soviet Mathematics. Springer-Verlag, Berlin, 1985. 

\bibitem{Nazarov} A.I. Nazarov, \emph{Hardy-Sobolev inequalities in a cone}, Probl. Mat. Anal. 31 (2005), 39-46 (Rus- sian); English transl.: J. Math. Sci. 132 (2006), N4, 419-427.

%
%
\bibitem{Peral} I. Peral. \emph{Multiplicity of solutions for the $p$-Laplacian.} Lecture Notes at the Second School on Nonlinear Functional Analysis and Applications to Differential Equations, ICTP, Trieste, 1997.

\bibitem{Sc} B. Sciunzi. \emph{Classification of positive $D^{1,p}(\mathbb{R}^n)$-solutions to the critical $p$-Laplace equation in $\mathbb{R}^n$.} Advances in Mathematics {\bf 291} (2016), 12-23.

\bibitem{Serrin_local} J. Serrin. \emph{Local behaviour of solutions of quasilinear equations.} Acta Math. Vol. 113 (1965), 219-240.


\bibitem{SmetsWillem} D. Smets and M. Willem, \emph{Partial symmetry and asymptotic behavior for some elliptic variational problems}, Calc. Var. Partial Differential Equations 18 (2003), no. 1, 57-75.



%

\bibitem{Stredulinsky} E. W. Stredulinsky, Weighted inequalities and degenerate elliptic partial differential equations, Lecture Notes in Mathematics, vol. 1074, Springer-Verlag,
Berlin, 1984.
\bibitem{Tolksdorf} P. Tolksdorf. \emph{Regularity for a more general class of quasilinear elliptic equations.} J. Differential Equations {\bf 51} (1) (1984) 126-150.
\bibitem{Vetois} J. V\'etois. \emph{A priori estimates and application to the symmetry of solutions for critical $p$-Laplace equations.} J. Differential Equations {\bf 260} (2016), 149-161.

\bibitem{Vetois19} S. Shakerian, J. V\'etois. \emph{Sharp pointwise estimates for weighted critical $p$-Laplace equations.} arXiv:1912.10940 (2019).




%
%
%
%

\end{thebibliography}
\end{document}